\newcommand{\tensor}[1]{{\mathfrak{#1}}}
\newcommand{\bmx}{\begin{pmatrix}}
\newcommand{\emx}{\end{pmatrix}}
\newcommand{\ul}[1]{[#1]}
\DeclareMathOperator{\tr}{tr} 
 \DeclareMathOperator{\End}{End}
\DeclareMathOperator{\cdet}{cdet}
\DeclareMathOperator{\Span}{span}
\def\D{\mathcal D}
\def\p{\mathfrak{p}}
\def\ha{\mbox{\small $\frac{1}{2}$}}
\def\C{\mathcal{C}}
\def\CC{\mathbb{C}}
\def\ZZ{\mathbb{Z}}
\def\P{\mathcal{P}}
\def\U{\mathcal{U}}
\def\1{\tensor{1}}
\def\2{\tensor{2}}
\def\3{\tensor{3}}
\def\4{\tensor{4}}
\newcommand{\mc}{\mathcal}
\newcommand{\mf}{\mathfrak}
\newcommand{\gl}{\mf{gl}}
\newcommand{\so}{\mf{so}}
\renewcommand{\sp}{\mf{sp}}
\newcommand{\ox}{\otimes}
\newcommand{\into}{\hookrightarrow}
\newcommand{\del}{\partial}
\newcommand{\pic}{\pi}
\newcommand{\pict}{\tilde\pi}
\newcommand{\piq}{\hat\pi}
\newcommand{\piqt}{\hat{\tilde\pi}}
\newcommand{\trp}{\,{}^t\!}
\newcommand{\E}{\mathsf E}
\numberwithin{equation}{section}
\newtheorem{Theorem}{Theorem}[section]
\newtheorem{Lemma}[Theorem]{Lemma}
\newtheorem{Proposition}[Theorem]{Proposition}
 { \theoremstyle{definition}
\newtheorem{Definition}[Theorem]{Definition}

\newtheorem{Remark}[Theorem]{Remark} }
\begin{document}

\allowdisplaybreaks

\newcommand{\arXivNumber}{1710.08672}

\renewcommand{\PaperNumber}{040}

\FirstPageHeading

\ShortArticleName{$({\mathfrak{gl}}_M, {\mathfrak{gl}}_N)$-Dualities in Gaudin Models with Irregular Singularities}

\ArticleName{$\boldsymbol{({\mathfrak{gl}}_M, {\mathfrak{gl}}_N)}$-Dualities in Gaudin Models\\ with Irregular Singularities}

\Author{Beno\^{\i}t VICEDO~$^\dag$ and Charles YOUNG~$^\ddag$}

\AuthorNameForHeading{B.~Vicedo and C.~Young}

\Address{$^\dag$~Department of Mathematics, University of York, York YO10 5DD, UK}
\EmailD{\href{mailto:benoit.vicedo@gmail.com}{benoit.vicedo@gmail.com}}

\Address{$^\ddag$~School of Physics, Astronomy and Mathematics, University of Hertfordshire,\\
\hphantom{$^\ddag$}~College Lane, Hatfield AL10 9AB, UK}
\EmailD{\href{mailto:c.a.s.young@gmail.com}{c.a.s.young@gmail.com}}

\ArticleDates{Received November 06, 2017, in final form April 27, 2018; Published online May 03, 2018}

\Abstract{We establish $({\mathfrak{gl}}_M, {\mathfrak{gl}}_N)$-dualities between quantum Gaudin models with irre\-gular singularities. Specifically, for any $M, N \in {\mathbb Z}_{\geq 1}$ we consider two Gaudin models: the one associated with the Lie algebra ${\mathfrak{gl}}_M$ which has a double pole at infinity and $N$ poles, counting multiplicities, in the complex plane, and the same model but with the roles of~$M$ and~$N$ interchanged. Both models can be realized in terms of Weyl algebras, i.e., free bosons; we establish that, in this realization, the algebras of integrals of motion of the two models coincide. At the classical level we establish two further generalizations of the duality. First, we show that there is also a~duality for realizations in terms of free fermions. Second, in the bosonic realization we consider the classical cyclotomic Gaudin model associated with the Lie algebra ${\mathfrak{gl}}_M$ and its diagram automorphism, with a double pole at infinity and $2N$ poles, counting multiplicities, in the complex plane. We prove that it is dual to a non-cyclotomic Gaudin model associated with the Lie algebra ${\mathfrak{sp}}_{2N}$, with a double pole at infinity and~$M$ simple poles in the complex plane. In the special case $N=1$ we recover the well-known self-duality in the Neumann model.}

\Keywords{Gaudin models; dualities; irregular singularities}

\Classification{17B80; 81R12; 82B23}

\section{Introduction}
Fix a set of $N$ distinct complex numbers $\{ z_i \}_{i=1}^N \subset \CC$, and an element $\lambda\in \gl_M^*$. The quadratic Hamiltonians of the quantum Gaudin model \cite{GaudinBookFrench,GaudinBook} associated to $\gl_M$ are the following elements of $U(\gl_M)^{\ox N}$:
\begin{gather*} \mc H_i = \sum_{j\neq i} \sum_{a,b=1}^N \frac{\E_{ab}^{(i)} \E_{ba}^{(j)}}{z_i-z_j} + \sum_{a,b=1}^N \lambda(\E_{ab}) \E_{ba}^{(i)}, \end{gather*}
where $\{ \mathsf E_{ab} \}_{a, b = 1}^M$ denote the standard basis of $\mathfrak{gl}_M$ and $\E_{ab}^{(i)}$ means $\E_{ab}$ in the $i$th tensor factor. The $\mc H_i$ belong to a large commutative subalgebra $\mc Z \subset U(\gl_M)^{\otimes N}$ called the Gaudin~\cite{Fopers} or Bethe~\cite{MTV1} subalgebra, for which an explicit set of generators is known \cite{ChervovTalalaev,MTV1,T}.

If the element $\lambda\in \gl_M^*$ is regular semisimple, i.e., if we can choose bases such that $\lambda(\E_{ab}) = \lambda_a \delta_{ab}$ for some distinct numbers $\{\lambda_a\}_{a=1}^M\subset \CC$, then one can also consider the following elements of $U(\gl_N)^{\ox M}$:
\begin{gather*} \widetilde{\mc H}_a = \sum_{b\neq a} \sum_{i,j=1}^M \frac{\tilde \E_{ij}^{(a)} \tilde \E_{ji}^{(b)}}{\lambda_a-\lambda_b} + \sum_{i=1}^M z_i \tilde \E_{ii}^{(a)}, \end{gather*}
where $\big\{\tilde\E_{ij} \big\}_{i,j = 1}^N$ denote the standard basis of $\mathfrak{gl}_N$. They belong to a large commutative subalgebra $\widetilde{\mc Z} \subset U(\gl_N)^{\otimes M}$.

Let $\CC^M$ denote the defining representation of $\gl_M$. Then $\mc Z$ can be represented as a subalgebra of
\begin{gather*} \End\big(\big(\CC^M\big)^{\ox N}\big) \cong \End\big(\CC^{NM}\big) \cong \End\big(\big(\CC^N\big)^{\ox M}\big).\end{gather*}
So can $\widetilde{\mc Z}$. In fact their images in $\End\big(\CC^{NM}\big)$ coincide. This is the $(\gl_M, \gl_N)$-duality for quantum Gaudin models first observed between the quadratic Gaudin Hamiltonians and the dynamical Hamiltonians in~\cite{TL02}, see also~\cite{TV02}. It was later proved in~\cite{MTVcapelli}, see also~\cite{CF}. (Under this realization the Hamiltonians $\widetilde{\mc H}_a\in \widetilde{\mc Z}$ of the dual model coincide with suitably defined \emph{dynamical Hamiltonians}~\cite{FMTV} of the original $\gl_M$ Gaudin model. See \cite{MTV2, MTVcapelli}.) The classical counterpart of this duality goes back to the works of J.~Harnad \cite{Adams:1990mj,Har94}.

In this paper we generalize this $(\gl_M, \gl_N)$-duality in a number of ways, for both the quantum and classical Gaudin models. Let us describe first the main result. Two natural generalizations of the Gaudin model above are to
\begin{enumerate}[(a)]\itemsep=0pt
\item\label{irreg} models in which the quadratic Hamiltonians (and the Lax matrix, see below) have higher order singularities at the marked points $z_i\in\CC$, $i = 1, \ldots, N$. Such models are called Gaudin models \emph{with irregular singularities}.\footnote{The reason for this terminology is that the spectrum of such models is described in terms of opers with irregular singularities; see~\cite{FFT} and also~\cite{VY3}.
Strictly speaking, the term $\lambda(\E_{ab}) \E_{ab}$ in $\mc H_i$ is already an irregular singularity of order 2 at~$\infty$ in the same sense: namely, the opers describing the spectrum have a double pole at~$\infty$. For that reason we refer to a Gaudin model with such terms in the Hamiltonians $\mc H_i$ as having a double pole at infinity.}
\item\label{jordan} models in which $\lambda\in \gl_{M}^*$ is not semisimple, i.e., has non-trivial Jordan blocks.\footnote{Let us note in passing that the case of $\lambda$ semisimple but not regular is very rich; see for example \cite{FFRyb, Rybnikov,Rcactus}.}
\end{enumerate}
We show that these two generalizations are natural $(\gl_M, \gl_N)$-duals to one another. Namely, we show that there is a correspondence among models generalized in both directions, (\ref{irreg})~and~(\ref{jordan}), and that under this correspondence the sizes of the Jordan blocks get exchanged with the degrees of the irregular singularities at the marked points in the complex plane. See Theorem~\ref{thm: q bispec bos} below.

The heart of the proof is the observation that the generating functions for the generators of both algebras $\mc Z$ and $\widetilde{\mc Z}$ can be obtained by evaluating, in two different ways, the column-ordered determinant of a certain Manin matrix. (A~similar trick was also used in \cite[Proposition~8]{CF}.) Given that observation, the duality between~(\ref{irreg}) and~(\ref{jordan}) above is essentially a consequence of the simple fact that the inverse of a Jordan block matrix
\begin{gather*} \bmx
x & 0 & \ldots & 0\\
-1 & x & \ldots & 0\\
\vdots & \ddots & \ddots & \vdots\\
0 & \ldots & -1 & x
\emx
\qquad\text{is of the form}\qquad
\left(
\begin{matrix}
x^{-1} & 0 & \ldots & 0\\
x^{-2} & x^{-1} & \ldots & 0\\
\vdots & \ddots & \ddots & \vdots\\
x^{-k} & \ldots & x^{-2} & x^{-1}
\end{matrix}
\right);
\end{gather*}
here the higher-order poles in $x$ will give rise to the irregular singularities of the dual Gaudin model.

Now let us give an overview of the results of the paper in more detail. Consider the direct sum of Lie algebras
\begin{gather} \label{Lie alg intro}
\gl_M^{(N)} \coloneqq \bigoplus_{i=1}^N \gl_M \oplus \gl_M^{\rm com},
\end{gather}
where the Lie algebra $\gl_M^{\rm com}$ in the last summand is isomorphic to $\gl_M$ as a vector space but endowed with the trivial Lie bracket. Henceforth we denote the copy of $\mathsf E_{ab}$ in the $i^{\rm th}$ direct summand of $\gl_M^{(N)}$ by $\mathsf E_{ab}^{(z_i)}$ and the copy in the last abelian summand $\gl_M^{\rm com}$ by $\mathsf E_{ab}^{(\infty)}$. In terms of these data, the formal Lax matrix of the Gaudin model associated with $\gl_M$, with a double pole at infinity and simple poles at each $z_i$, $i=1,\ldots,N$, is given by
\begin{gather} \label{Lax intro}
\mathcal L(z) dz \coloneqq \sum_{a,b = 1}^M E_{ba} \otimes \left( \mathsf E_{ab}^{(\infty)} + \sum_{i=1}^N \frac{\mathsf E^{(z_i)}_{ab}}{z - z_i} \right) dz.
\end{gather}
Here $E_{ab} \coloneqq \rho(\mathsf E_{ab})$ where $\rho \colon \gl_M \to \operatorname{Mat}_{M \times M}(\CC)$ is the defining representation.

Regarding $\mathcal L(z)$ as an $M \times M$ matrix with entries in the symmetric algebra $S\big( \gl_M^{(N)} \big)$, the coefficients of its characteristic polynomial
\begin{gather*}
\det\big( \lambda {\bf 1}_{M \times M} - \mathcal L(z) \big)
\end{gather*}
span a large Poisson commutative subalgebra $\mathscr Z^{\rm cl}_{(z_i)}\big( \gl_M^{(N)} \big)$ of $S\big( \gl_M^{(N)} \big)$. Given a classical model described by a Poisson algebra $\P$ and Hamiltonian $H \in \P$, the latter becomes of particular interest if we have a homomorphism of Poisson algebras $\pi\colon S\big( \gl_M^{(N)} \big) \to \P$ such that $H$ lies in the image of $\mathscr Z^{\rm cl}_{(z_i)}\big( \gl_M^{(N)} \big)$. Indeed, $\pi \big( \mathscr Z^{\rm cl}_{(z_i)}\big( \gl_M^{(N)} \big) \big) \subset \P$ then consists of Poisson commuting integrals of motion of the model.

The Lax matrix \eqref{Lax intro} can also be used to describe quantum models by regarding it instead as an $M \times M$ matrix with entries in the universal enveloping algebra $U\big( \gl_M^{(N)} \big)$. In this case, a large commutative subalgebra $\mathscr Z_{(z_i)}\big( \gl_M^{(N)} \big) \subset U\big( \gl_M^{(N)} \big)$, called the \emph{Gaudin algebra}, is spanned by the coefficients in the partial fraction decomposition of the rational functions obtained as the coefficients of the differential operator
\begin{gather*} 
\cdet\big( \partial_z {\bf 1}_{M \times M} - \trp \mathcal L(z) \big),
\end{gather*}
where $\cdet$ is the column ordered determinant.
Given a unital associative algebra $\mathcal U$ and a homomorphism $\piq \colon U\big( \gl_M^{(N)} \big) \to \mathcal U$, the image of $\mathscr Z_{(z_i)}\big( \gl_M^{(N)} \big)$ provides a large commutative subalgebra of~$\mathcal U$.

Let $\mathcal U$ be the Weyl algebra generated by the commuting variables $x^a_i$ for $i=1, \ldots, N$ and $a=1, \ldots, M$ together with their partial derivatives $\partial^a_i \coloneqq \partial / \partial x^a_i$. We introduce another set $\{ \lambda_a \}_{a=1}^M \subset \CC$ of $M$ distinct complex numbers. It is well known that
\begin{gather} \label{piq rep intro}
\piq\big( \mathsf E_{ab}^{(\infty)} \big) = \lambda_a \delta_{ab}, \qquad \piq\big( \mathsf E_{ab}^{(z_i)} \big) = x^a_i \partial^b_i
\end{gather}
defines a homomorphism $\piq \colon U\big( \gl_M^{(N)} \big) \to \mathcal U$. Therefore, in particular, $\piq \big( \mathscr Z_{(z_i)}\big( \gl_M^{(N)} \big) \big)$ is a commutative subalgebra of $\mathcal U$. On the other hand, given the new set of complex numbers $\lambda_a$, $a =1, \ldots, M$, we may now equally consider the Gaudin model associated with $\gl_N$, with a~double pole at infinity and simple poles at each $\lambda_a$ for $a=1,\ldots,M$. Its formal Lax matrix is defined as in~\eqref{Lax intro}, explicitly we let
\begin{gather*}
\tilde{\mathcal L}(\lambda) d\lambda \coloneqq \sum_{i,j = 1}^N \tilde E_{ji} \otimes \left( \tilde{\mathsf E}_{ij}^{(\infty)} + \sum_{a=1}^M \frac{\tilde{\mathsf E}^{(\lambda_a)}_{ij}}{\lambda - \lambda_a} \right) d\lambda.
\end{gather*}
We can define another homomorphism $\piqt \colon U\big( \gl_N^{(M)} \big) \to \mathcal U$ as
\begin{gather*}
\piqt\big( \tilde{\mathsf E}_{ij}^{(\infty)} \big) = z_i \delta_{ij}, \qquad \piqt\big( \tilde{\mathsf E}_{ij}^{(\lambda_a)} \big) = \partial^a_j x^a_i.
\end{gather*}
(Note here the order between $\partial^a_j$ and $x^a_i$ as compared, for instance, to \cite[Section~5.1]{MTV2} where $\tilde{\mathsf E}_{ij}^{(\lambda_a)}$ is realised as $x^a_i \partial^a_j$.) The $(\gl_M, \gl_N)$-duality between the above two Gaudin models associated with $\gl_M$ and $\gl_N$ can be formulated, in the present conventions, as the equality of differential polynomials
\begin{gather*}
\piq \left( \prod_{i=1}^N (z - z_i) \cdet\big( \partial_z {\bf 1}_{M \times M} - \trp \mathcal L(z) \big) \right) = \piqt \left( \prod_{a=1}^M (\partial_z - \lambda_a) \cdet\big( z {\bf 1}_{N \times N} - \tilde{\mathcal L}(\partial_z) \big) \right),
\end{gather*}
whose coefficients are $\mathcal U$-valued polynomials in $z$. (See Section~\ref{sec: quantum Gaudin bispec} for the precise definition of the expression appearing on the right hand side.) In the classical setting discussed above the same identity holds with $\partial_z$ replaced everywhere by the spectral parameter~$\lambda$, the Weyl algebra $\mathcal U$ is replaced by the Poisson algebra $\mathcal P$ defined as the polynomial algebra in the canonically conjugate variables $(p^a_i, x^a_i)$ and column ordered determinants replaced by ordinary determinants.

We generalise this statement in a number of directions. Firstly, in both the classical and quantum cases, we consider Gaudin models with irregular singularities. Specifically, fix a positive integer $n \in \ZZ_{\geq 1}$ and let $\{ \tau_i \}_{i =1}^n \subset \ZZ_{\geq 1}$ be such that $\sum\limits_{i=1}^n \tau_i = N$. We consider a $\gl_M$-Gaudin model with a double pole at infinity and an irregular singularity of order~$\tau_i$ at each~$z_i$ for $i = 1, \ldots, n$. The direct sum of Lie algebras~\eqref{Lie alg intro} is replaced in this case by a direct sum of Takiff Lie algebras\footnote{These were introduced in the mathematics literature in~\cite{Tak71} but have also been widely used in the mathematical physics literature though not by this name, see for instance~\cite{RSTS79}.}
\begin{gather} \label{Takiff intro}
\gl_M^\D \coloneqq \bigoplus_{i=1}^n \big( \gl_M[\varepsilon] / \varepsilon^{\tau_i}\gl_M[\varepsilon] \big) \oplus \gl_M^{\rm com},
\end{gather}
where $\D$ is a divisor encoding the collection of points $z_i$ for $i=1, \ldots, n$ weighted by the integers~$\tau_i$ for $i=1, \ldots, n$. The formal Lax matrix $\mathcal L(z)$ of this Gaudin model is an $M \times M$ matrix with entries in the Lie algebra~$\gl_M^\D$, and the Gaudin algebra $\mathscr Z_{(z_i)}(\gl_M^\D)$ is spanned by the coefficients in the partial fraction decomposition of the rational functions obtained as the coefficients of the differential operator
\begin{gather} \label{cdet intro}
\cdet\big( \partial_z {\bf 1}_{M \times M} - \trp \mathcal L(z) \big).
\end{gather}

Let $\mathcal U$ be the same unital associative algebra as above. In order to define a suitable homomorphism $\piq \colon U\big( \gl_M^\D \big) \to \mathcal U$ we combine representations of the Takiff Lie algebras $\gl_M[\varepsilon] / \varepsilon^{\tau_i}\gl_M[\varepsilon] \to \mathcal U$ for each $i = 1, \ldots, n$, naturally generalising the representation~$\gl_M \to \mathcal U$, $\mathsf E_{ab} \mapsto x^a_i \partial^b_i$ in the above regular singularity case, together with a constant homomorphism $\gl_M^{\rm com} \to \CC 1 \subset \mathcal U$. As before, the choice of the latter is what determines the position of the poles of the dual $\gl_N$-Gaudin model. In fact, if instead of choosing a diagonal matrix as in~\eqref{piq rep intro} we let
\begin{gather*}
\big( \piq\big( \mathsf E_{ab}^{(\infty)} \big) \big)_{a, b=1}^M =
\left(
\begin{matrix}
\lambda_1 & & & & & & & &\\
1 & \lambda_1 & & & & & & 0 &\\
& \ddots & \ddots & & & & & &\\
& & 1 & \lambda_1 & & & & &\\
& & & & \ddots & & &\\
& & & & & \lambda_m & & &\\
& & & & & 1 & \lambda_m & &\\
& 0 & & & & & \ddots & \ddots &\\
& & & & & & & 1 & \lambda_m
\end{matrix}
\right)
\end{gather*}
be a direct sum of $m$ Jordan blocks of size $\tilde{\tau}_a \in \ZZ_{\geq 1}$ with $\lambda_a \in \CC$ along the diagonal for $a = 1, \ldots, m$, such that $\sum\limits_{a=1}^m \tilde{\tau}_a = M$, then the dual Gaudin model associated with $\gl_N$ will have a~double pole at infinity and an irregular singularity at each $\lambda_a$ of order $\tilde{\tau}_a$ for $a = 1, \ldots, m$. Let~$\tilde\D$ be the divisor corresponding to these data and $\gl_N^{\tilde\D}$ the associated direct sum of Takiff algebras, cf.~\eqref{Takiff intro}. After defining a corresponding homomorphism $\piqt \colon U\big( \gl_N^{\tilde \D} \big) \to \mathcal U$ for this Gaudin model, we prove a $(\gl_M, \gl_N)$-duality similar to the one stated above for the regular singularity case, see Theorem~\ref{thm: q bispec bos}. As before, a similar result also holds in the classical setting where~$\pic$ and~$\pict$ in this case are homomorphisms from the symmetric algebras $S\big( \gl_M^\D \big)$ and $S\big( \gl_N^{\tilde\D} \big)$, respectively, to the Poisson algebra~$\mathcal P$, see Theorem~\ref{thm: cla bispec bos}.

In the classical setup of Section~\ref{sec: c} we also consider fermionic generalisations of $(\gl_M, \gl_N)$-duality. Specifically, for the Poisson algebra $\mathcal P$ we take instead the even part of the $\ZZ_2$-graded Poisson algebra generated by canonically conjugate Grassmann variable pairs $(\pi^a_i, \psi^a_i)$. The corresponding homomorphisms of Poisson algebras $\pic_{\mathsf f} \colon S\big( \gl_M^\D \big) \to \mathcal P$ and $\pict_{\mathsf f} \colon S\big( \gl_N^{\tilde\D} \big) \to \mathcal P$ are defined in Lemma~\ref{lem: pi tilde pi ferm}. In this case we establish a different type of $(\gl_M, \gl_N)$-duality between the same Gaudin models with irregular singularities and associated with $\gl_M$ and $\gl_N$ as above. Denoting by $\mathcal L(z)$ and $\tilde{\mathcal L}(\lambda)$ their respective Lax matrices, it takes the form
\begin{gather*}
\pic_{\mathsf f}\big( \det\big( \lambda {\bf 1}_{M \times M} - \mc L(z) \big) \big) \pict_{\mathsf f}\big( \det\big( z {\bf 1}_{N \times N} - \tilde{\mc L}(\lambda) \big) \big) = \prod_{i=1}^n (z - z_i)^{\tau_i} \prod_{a=1}^m (\lambda - \lambda_a)^{\tilde\tau_a}.
\end{gather*}
See Theorem \ref{thm: cla bispec fer}, the proof of which is completely analogous to that of Theorem~\ref{thm: cla bispec bos} in the bosonic setting, using basic properties of the Berezinian of an $(M|N) \times (M|N)$ supermatrix. We leave the possible generalisation of such a fermionic $(\gl_M, \gl_N)$-duality to the quantum setting for future work.

Finally, in Section~\ref{sec: cyclo bispec} we consider extensions of these results to cyclotomic Gaudin models also in the classical setting. Specifically, we consider a $\ZZ_2$-cyclotomic $\gl_M$-Gaudin model with a double pole at infinity as usual and with irregular singularities at the origin of order $\tau_0$ and at points $z_i \in \CC^\times$, with disjoint orbits under $z \mapsto - z$, of order $\tau_i$ for each $i = 1,\dots, n$. Let $N = \tau_0 + \sum\limits_{i=1}^n \tau_i$. Using the bosonic Poisson algebra $\mathcal P$ generated by canonically conjugate variables $(p^a_i, x^a_i)$ we prove that this model is dual to a Gaudin model associated with the Lie algebra $\sp_{2N}$, with a double pole at infinity and regular singularities at $M$ points $\lambda_a$, $a = 1, \ldots, M$, see Theorem~\ref{thm: cla bispec bos cycl}. We show that the well know self-duality in the Neumann model is a particular example of the latter with $N=1$. Generalisations of such $(\gl_M, \gl_N)$-dualities involving cyclotomic Gaudin models to the quantum case are less obvious since it is known~\cite{VY1} that in this case the cyclotomic Gaudin algebra is not generated by a cdet-type formula as in~\eqref{cdet intro}, see Remark~\ref{rem: cyclo quantum}.

\section{Gaudin models with irregular singularities}

\subsection[Lie algebras $\mathfrak{gl}^{\D}_M$ and $\mathfrak{gl}^{\tilde{\D}}_N$]{Lie algebras $\boldsymbol{\mathfrak{gl}^{\D}_M}$ and $\boldsymbol{\mathfrak{gl}^{\tilde{\D}}_N}$}\label{sec: glND}

Let $M, N \in \ZZ_{\geq 1}$. Denote by $\mathsf E_{ab}$ for $a, b = 1, \ldots, M$ the standard basis of $\mathfrak{gl}_M$ and by $\tilde{\mathsf E}_{ij}$ for $i, j = 1, \ldots, N$ the standard basis of $\mathfrak{gl}_N$.

Let $z_i \in \CC$ for $i =1, \ldots, n$ and $\lambda_a \in \CC$ for $a = 1, \ldots, m$ be such that $z_i \neq z_j$ for $i \neq j$ and $\lambda_a \neq \lambda_b$ for $a \neq b$. Pick and fix integers $\tau_i \in \ZZ_{\geq 1}$ for each $i = 1, \ldots, n$ and $\tilde \tau_a \in \ZZ_{\geq 1}$ for each $a = 1, \ldots, m$. We call these the \emph{Takiff degrees} at $z_i$ and $\lambda_a$, respectively.
Consider the effective divisors
\begin{gather*}
\D = \sum_{i=1}^n \tau_i \cdot z_i + 2 \cdot \infty, \qquad \tilde\D = \sum_{a=1}^m \tilde \tau_a \cdot \lambda_a + 2 \cdot \infty.
\end{gather*}
(Recall that an \emph{effective divisor} is a finite formal linear combination of points in some Riemann surface, here the Riemann sphere $\CC\cup \{\infty\}$, with coefficients in $\ZZ_{\geq 0}$.)

We require that $\deg \D = N + 2$ and $\deg \tilde\D = M + 2$ or in other words,
\begin{gather*}
\sum_{i=1}^n \tau_i = N\qquad\text{and}\qquad\sum_{a = 1}^m \tilde \tau_a = M.
\end{gather*}
Note that if $\tau_i = 1 = \tilde \tau_a$ for all $i = 1, \ldots, n$ and $a = 1, \ldots, m$ then in fact we have $n = N$ and $m = M$. More generally, it will be convenient to break up the list of integers from $1$ to~$N$ into~$n$ blocks of sizes~$\tau_i$, $i = 1, \ldots, n$, and similarly for the list of integers from $1$ to $M$. To that end, let us define
\begin{gather} \nu_i := \sum_{j=1}^{i-1} \tau_j,\qquad\text{and}\qquad
 \tilde\nu_a := \sum_{b=1}^{a-1} \tilde\tau_b\label{nudef}\end{gather}
for $i=1,\dots,N$ and $a=1,\dots,M$, so that
\begin{gather*}
(1, \ldots, N) = (1, \dots, \tau_1; \nu_2 +1 , \dots, \nu_2+\tau_2; \dots; \nu_{n}+1, \dots, \nu_{n}+\tau_{n}),\\
(1, \ldots, M) = (1, \dots, \tilde\tau_1; \tilde\nu_2 +1 , \dots, \tilde\nu_2+\tilde\tau_2; \dots;\tilde\nu_{m}+1, \dots, \tilde\nu_{m}+\tilde\tau_{m}).
\end{gather*}
Note that $\nu_1 = \tilde \nu_1 = 0$.

Let $\mathfrak{gl}_M[\varepsilon] \coloneqq \mathfrak{gl}_M \otimes \CC[\varepsilon]$ denote the Lie algebra of polynomials in a formal variable $\varepsilon$ with coefficients in $\gl_M$. For any $k \in \ZZ_{\geq 1}$ we have the ideal $\varepsilon^k\mathfrak{gl}_M[\varepsilon] \coloneqq \mathfrak{gl}_M \otimes \varepsilon^k \CC[\varepsilon]$. The corresponding quotient $\mathfrak{gl}_M[\varepsilon]/\varepsilon^k \coloneqq \mathfrak{gl}_M[\varepsilon]/\varepsilon^k\mathfrak{gl}_M[\varepsilon]$ is called a \emph{Takiff} Lie algebra over $\gl_M$. When $k \in \ZZ_{\geq 2}$, for every $n \in \ZZ_{\geq 1}$ with $n < k$ we have a non-trivial ideal in $\mathfrak{gl}_M[\varepsilon]/\varepsilon^k$ given by $\varepsilon^n \mathfrak{gl}_M[\varepsilon]/\varepsilon^k \coloneqq \varepsilon^n \mathfrak{gl}_M[\varepsilon]/\varepsilon^k\mathfrak{gl}_M[\varepsilon]$, which by abuse of terminology we shall also refer to as a~Ta\-kiff Lie algebra. We define direct sums of Takiff Lie algebras over $\mathfrak{gl}_M$ and $\mathfrak{gl}_N$, respectively, as
\begin{gather*}
\mathfrak{gl}_M^\D \coloneqq \varepsilon_\infty \mathfrak{gl}_M[\varepsilon_\infty] / \varepsilon_\infty^2 \oplus \bigoplus_{i=1}^n \mathfrak{gl}_M[\varepsilon_{z_i}] / \varepsilon_{z_i}^{\tau_i},\\
\mathfrak{gl}_N^{\tilde\D} \coloneqq \tilde\varepsilon_\infty \mathfrak{gl}_N[\tilde\varepsilon_\infty] / \tilde\varepsilon_\infty^2 \oplus \bigoplus_{a=1}^m \mathfrak{gl}_N[\tilde\varepsilon_{\lambda_a}] / \tilde\varepsilon_{\lambda_a}^{\tilde \tau_a}.
\end{gather*}
Note that $\varepsilon_\infty \mathfrak{gl}_M[\varepsilon_\infty] / \varepsilon_\infty^2$ and $\tilde\varepsilon_\infty \mathfrak{gl}_N[\tilde\varepsilon_\infty] / \tilde\varepsilon_\infty^2$ are respectively isomorphic to the abelian Lie algebras $\gl^{\rm com}_M$ and $\gl^{\rm com}_N$ in the notation used in the introduction, see, e.g.,~\eqref{Lie alg intro}.

We use the abbreviated notation $\mathsf X \varepsilon^k$ for an element $\mathsf X \otimes \varepsilon^k \in \mathfrak{gl}_M[\varepsilon]$ where $\mathsf X \in \mathfrak{gl}_M$ and $k \in \ZZ_{\geq 0}$, and likewise for elements of $\mathfrak{gl}_N[\varepsilon]$. Fix a basis of $\mathfrak{gl}_M^\D$ defined by
\begin{gather*}
\mathsf E^{(z_i)}_{ab \ul r} \coloneqq \mathsf E_{ab} \varepsilon_{z_i}^r, \qquad \mathsf E^{(\infty)}_{ab \ul 1} \coloneqq \mathsf E_{ab} \varepsilon_\infty
\end{gather*}
for $i = 1, \ldots, N$, $a, b = 1, \ldots, M$ and $r = 0, \ldots, \tau_i - 1$. Let us note, in particular, that $\mathsf E^{(z_i)}_{ab \ul r} = 0$ whenever $r \geq \tau_i$. Likewise, as a basis of $\mathfrak{gl}_N^{\tilde \D}$ we take{\samepage
\begin{gather*}
\tilde{\mathsf E}^{(\lambda_a)}_{ij \ul s} \coloneqq \tilde{\mathsf E}_{ij} \tilde\varepsilon_{\lambda_a}^s, \qquad
\tilde{\mathsf E}^{(\infty)}_{ij \ul 1} \coloneqq \tilde{\mathsf E}_{ij} \tilde\varepsilon_\infty
\end{gather*}
for $a = 1, \ldots, M$, $i, j = 1, \ldots, N$ and $s = 0, \ldots, \tilde \tau_a - 1$. Here also $\tilde{\mathsf E}^{(\lambda_a)}_{ij \ul s} = 0$ for $s \geq \tilde \tau_a$.}

The set of non-trivial Lie brackets of these basis elements read
\begin{gather}\label{M takiff rels}
\big[ \mathsf E^{(z_i)}_{ab \ul r}, \mathsf E^{(z_j)}_{cd \ul s} \big] = \delta_{ij} [\mathsf E_{ab}, \mathsf E_{cd}]^{(z_i)}_{\ul {r+s}} = \delta_{ij} \delta_{bc} \mathsf E^{(z_i)}_{ad \ul {r+s}} - \delta_{ij} \delta_{ad} \mathsf E^{(z_i)}_{cb \ul {r+s}},
\end{gather}
for any $i, j = 1, \ldots, n$ and $a, b, c, d = 1, \ldots, M$, and
\begin{gather*}
\big[ \tilde{\mathsf E}^{(\lambda_a)}_{ij \ul r}, \tilde{\mathsf E}^{(\lambda_b)}_{kl \ul s} \big] = \delta_{ab} [\tilde{\mathsf E}_{ij}, \tilde{\mathsf E}_{kl}]^{(\lambda_a)}_{\ul {r+s}} = \delta_{ab} \delta_{jk} \tilde{\mathsf E}^{(\lambda_a)}_{il \ul {r+s}} - \delta_{ab} \delta_{il} \tilde{\mathsf E}^{(\lambda_a)}_{kj \ul {r+s}},
\end{gather*}
for any $i, j, k, l = 1, \ldots, N$ and $a, b = 1, \ldots, m$. Note, in particular, that $\mathsf E^{(\infty)}_{ab \ul 1}$ and $\tilde{\mathsf E}^{(\infty)}_{ij \ul 1}$ are Casimirs of the Lie algebras $\mathfrak{gl}^\D_M$ and $\mathfrak{gl}^{\tilde\D}_N$, respectively.

\subsection{Lax matrices}
Let $\rho \colon \mathfrak{gl}_M \to \operatorname{Mat}_{M \times M}(\CC)$ and $\tilde\rho \colon \mathfrak{gl}_N \to \operatorname{Mat}_{N \times N}(\CC)$ denote the defining representations of~$\mathfrak{gl}_M$ and~$\mathfrak{gl}_N$, respectively. We write $E_{ab} \coloneqq \rho(\mathsf E_{ab})$ and $\tilde E_{ij} \coloneqq \tilde\rho\big(\tilde{\mathsf E}_{ij}\big)$.

The sets $\{ \mathsf E_{ab} \}_{a,b=1}^M$ and $\{ \mathsf E_{ba} \}_{a,b =1}^M$ form dual bases of $\mathfrak{gl}_M$ with respect to the trace in the representation $\rho$ since $\tr(E_{ab} E_{cd}) = \delta_{ad} \delta_{bc}$ for all $a, b, c, d = 1, \ldots, M$. Likewise, dual bases of~$\mathfrak{gl}_N$ with respect to the trace in the representation $\tilde \rho$ are given by $\big\{ \tilde{\mathsf E}_{ij} \big\}_{i,j=1}^N$ and $\big\{ \tilde{\mathsf E}_{ji} \big\}_{i,j=1}^N$.

The Lax matrix of the Gaudin model associated with $\gl^\D_M$ is given by
\begin{subequations} \label{formal Lax glM glN Takiff}
\begin{gather}\label{formal Lax glM}
\mc L^\D(z) dz \coloneqq \sum_{a, b=1}^M E_{ba} \otimes \left( \mathsf E^{(\infty)}_{ab \ul 1} + \sum_{i=1}^n \sum_{r=0}^{\tau_i - 1} \frac{\mathsf E^{(z_i)}_{ab \ul r}}{(z - z_i)^{r+1}} \right) dz.
\end{gather}
It is an $M\times M$ matrix whose coefficients are rational functions of $z$ valued in $\gl_M^\D$. Likewise, the Lax matrix of the Gaudin model associated with $\gl^{\tilde \D}_N$ reads
\begin{gather}
\label{formal Lax glN} \mc L^{\tilde \D}(\lambda) d\lambda \coloneqq \sum_{i, j=1}^N \tilde{E}_{ji} \otimes \left( \tilde{\mathsf E}^{(\infty)}_{ij \ul 1} + \sum_{a=1}^m \sum_{s=0}^{\tilde \tau_a - 1} \frac{\tilde{\mathsf E}^{(\lambda_a)}_{ij \ul s}}{(\lambda - \lambda_a)^{s+1}} \right) d\lambda,
\end{gather}
\end{subequations}
and is an $N\times N$ matrix with entries rational functions of $\lambda$ valued in $\gl_N^{\tilde \D}$.

\section[Classical $(\gl_M, \gl_N)$-duality]{Classical $\boldsymbol{(\gl_M, \gl_N)}$-duality} \label{sec: c}

\subsection{Classical Gaudin model} \label{sec: spectral curves}

The algebra of observables of the classical Gaudin model associated with $\gl^\D_M$ is the symmetric tensor algebra $S\big(\gl^\D_M\big)$. It is a Poisson algebra: the Poisson bracket is defined to be equal to the Lie bracket \eqref{M takiff rels} on the subspace $\gl^\D_M\into S\big(\gl^\D_M\big)$ and then extended by the Leibniz rule to the whole of $S\big(\gl^\D_M\big)$. Consider the quantity
\begin{gather} \label{spectral curve glM}
\prod_{i=1}^n (z - z_i)^{\tau_i} \det\big(\lambda \mathbf 1_{M\times M} - \mc L^\D(z)\big).
\end{gather}
This is a polynomial of degree $M$ in $\lambda$ whose coefficients are rational functions in $z$ with coefficients in $S\big(\gl^\D_M\big)$. The \emph{classical Gaudin algebra $\mathscr Z^{\rm cl}\big(\gl^\D_M\big)$ } of the $\gl^\D_M$-Gaudin model is by definition the linear subspace of $S\big(\gl^\D_M\big)$ spanned by these coefficients. It is a Poisson-commutative subalgebra of $S\big(\gl^\D_M\big)$.

The classical Gaudin algebra $\mathscr Z^{\rm cl}\big(\gl^{\tilde \D}_N\big)$ of the $\gl^{\tilde \D}_N$-Gaudin model is defined analogously in terms of the following polynomial of degree $N$ in $z$ with coefficients rational in $\lambda$,
\begin{gather} \label{spectral curve glN}
\prod_{a=1}^m (\lambda - \lambda_a)^{\tilde\tau_a} \det \big(z \mathbf 1_{N\times N} - \mc L^{\tilde \D}(\lambda)\big).
\end{gather}

\subsection{Bosonic realisation} \label{sec: bos real cla}
Introduce the Poisson algebra $\P_{\mathsf b} \coloneqq \CC\big[x^a_i, p^b_j\big]_{i,j=1\; a,b=1}^{N\quad\;\, M}$ with Poisson brackets
\begin{gather} \label{PB for Pb}
\big\{ x^a_i, x^b_j \big\} = 0, \qquad \big\{ p^a_i, x^b_j \big\} = \delta_{ij} \delta_{ab}, \qquad \big\{ p^a_i, p^b_j \big\} = 0,
\end{gather}
for $a, b = 1, \ldots, M$ and $i, j = 1, \ldots, N$. In the following we shall regard $\P_{\mathsf b}$ as a Lie algebra under the Poisson bracket.

For any $x \in \CC$ and $k \in \ZZ_{\geq 1}$ we denote by $J_k(x)$ the Jordan block of size $k \times k$ with $x$ along the diagonal and $-1$'s below the diagonal, namely
\begin{gather*} 
J_k(x) = \left(
\begin{matrix}
x & 0 & \ldots & 0\\
-1 & x & \ldots & 0\\
\vdots & \ddots & \ddots & \vdots\\
0 & \ldots & -1 & x
\end{matrix}
\right).
\end{gather*}
We note for later that if $x \neq 0$ then this is invertible and its inverse is given by
\begin{gather} \label{J block inv}
J_k(x)^{-1} = \left(
\begin{matrix}
x^{-1} & 0 & \ldots & 0\\
x^{-2} & x^{-1} & \ldots & 0\\
\vdots & \ddots & \ddots & \vdots\\
x^{-k} & \ldots & x^{-2} & x^{-1}
\end{matrix}
\right).
\end{gather}

\begin{Lemma} \label{lem: pic pict}
The linear maps $\pic_{\mathsf b}\colon \mathfrak{gl}_M^\D \to \P_{\mathsf b}$ and $\pict_{\mathsf b}\colon \mathfrak{gl}_N^{\tilde\D} \to \P_{\mathsf b}$
defined by
\begin{gather*}
\pic_{\mathsf b}\big(\mathsf E^{(z_i)}_{ab \ul r}\big) =
\sum_{u= \nu_i+1}^{\nu_i +\tau_i - r}
x^a_{u+r} p^b_u, \qquad
\pic_{\mathsf b}\big(\mathsf E^{(\infty)}_{ab \ul 1}\big) = - \left( \bigoplus_{c=1}^m J_{\tilde \tau_c}(-\lambda_c) \right)_{ba},
\end{gather*}
for every $r = 0, \ldots, \tau_i -1$, $i = 1, \ldots, n$ and $a, b = 1,\ldots, M$, and
\begin{gather*}
\pict_{\mathsf b}\big(\tilde{\mathsf E}^{(\lambda_a)}_{ij \ul s} \big) = \sum_{u= \tilde\nu_a+1}^{\tilde\nu_a + \tilde\tau_a - s}p^u_j x^{u+s}_i, \qquad
\pict_{\mathsf b}\big(\tilde{\mathsf E}^{(\infty)}_{ij \ul 1} \big) = - \left( \bigoplus_{k=1}^n J_{\tau_k}(-z_k) \right)_{ji},
\end{gather*}
for every $s = 0, \ldots, \tilde\tau_a - 1$, $i,j = 1, \ldots, N$ and $a = 1,\ldots, m$, are homomorphisms of Lie algebras. They extend uniquely to homomorphisms of Poisson algebras $\pic_{\mathsf b}\colon S\big(\mathfrak{gl}_M^\D\big) \to \P_{\mathsf b}$ and $\pict_{\mathsf b}\colon S\big(\mathfrak{gl}_N^{\tilde\D}\big) \to \P_{\mathsf b}$.
\end{Lemma}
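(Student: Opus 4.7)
I would verify by a direct calculation that $\pic_{\mathsf b}$ and $\pict_{\mathsf b}$ are Lie algebra homomorphisms on the chosen bases, and then invoke the standard fact that a Lie algebra homomorphism $\varphi\colon \g\to A$ into the Lie algebra underlying a commutative Poisson algebra $A$ extends uniquely to a Poisson algebra homomorphism $S(\g)\to A$. By the universal property of the symmetric algebra, $\varphi$ has a unique extension as a homomorphism of commutative algebras; the Leibniz rule on both sides then automatically forces this extension to respect Poisson brackets as soon as it does so on generators. The heart of the proof is therefore to check the Lie bracket relations on the generators listed in Section~\ref{sec: glND}.

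\textbf{The infinity elements.} The basis elements $\mathsf E^{(\infty)}_{ab\ul 1}$ and $\tilde{\mathsf E}^{(\infty)}_{ij\ul 1}$ are Casimirs of $\gl_M^\D$ and $\gl_N^{\tilde\D}$, as noted just after~\eqref{M takiff rels}. Their images under $\pic_{\mathsf b}$ and $\pict_{\mathsf b}$ are by construction scalars in $\CC\subset \P_{\mathsf b}$, hence Poisson-central. Compatibility with every bracket involving an infinity generator is therefore automatic.

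\textbf{The finite elements.} The nontrivial check is the identity
\begin{gather*}
\bigl\{\pic_{\mathsf b}\bigl(\mathsf E^{(z_i)}_{ab\ul r}\bigr),\, \pic_{\mathsf b}\bigl(\mathsf E^{(z_j)}_{cd\ul s}\bigr)\bigr\}
= \delta_{ij}\delta_{bc}\,\pic_{\mathsf b}\bigl(\mathsf E^{(z_i)}_{ad\ul{r+s}}\bigr) - \delta_{ij}\delta_{ad}\,\pic_{\mathsf b}\bigl(\mathsf E^{(z_i)}_{cb\ul{r+s}}\bigr),
\end{gather*}
which matches the Takiff bracket~\eqref{M takiff rels}. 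Substituting the definition and applying the Leibniz rule together with~\eqref{PB for Pb} yields, for each pair of monomials,
\begin{gather*}
\bigl\{x^a_{u+r}p^b_u,\, x^c_{v+s}p^d_v\bigr\}
= \delta_{bc}\delta_{u,v+s}\,x^a_{u+r}p^d_v - \delta_{ad}\delta_{u+r,v}\,x^c_{v+s}p^b_u.
\end{gather*}
Summing this over $u\in\{\nu_i+1,\dots,\nu_i+\tau_i-r\}$ and $v\in\{\nu_j+1,\dots,\nu_j+\tau_j-s\}$, each Kronecker condition ($u=v+s$ or $u+r=v$) forces an integer to lie simultaneously in two ranges drawn from the partition of $\{1,\ldots,N\}$ into blocks $\{\nu_k+1,\ldots,\nu_k+\tau_k\}$ fixed in~\eqref{nudef}; this can happen only when $i=j$, producing the overall factor $\delta_{ij}$. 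The change of summation variable $w=u-s$ (respectively $w=v-r$) then rewrites the surviving sum exactly as $\pic_{\mathsf b}(\mathsf E^{(z_i)}_{ad\ul{r+s}})$ (respectively $\pic_{\mathsf b}(\mathsf E^{(z_i)}_{cb\ul{r+s}})$); the new upper bound $\nu_i+\tau_i-r-s$ automatically produces an empty sum when $r+s\geq\tau_i$, consistent with the convention $\mathsf E^{(z_i)}_{ad\ul{r+s}}=0$ in that range.

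\textbf{Dual argument and obstacle.} The verification for $\pict_{\mathsf b}$ is entirely parallel, with the roles of upper and lower indices exchanged and the partition of $\{1,\ldots,M\}$ by the blocks $\{\tilde\nu_a+1,\ldots,\tilde\nu_a+\tilde\tau_a\}$ replacing that of $\{1,\ldots,N\}$ by the $\nu_i$. The inverted factor ordering $p^u_j x^{u+s}_i$ used in the definition of $\pict_{\mathsf b}$ is immaterial since $\P_{\mathsf b}$ is commutative. No substantive obstacle arises; the only delicacy is to keep the two distinct block decompositions and the dual roles of the $(x,p)$ variables straight throughout the bookkeeping.
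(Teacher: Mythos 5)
Your proposal is correct and takes essentially the same approach as the paper: the paper establishes this lemma by the identical direct computation (written out in detail for the quantum analogue, Lemma~\ref{lem: pi tilde pi quant b}, and then transported line-by-line with $\del$ replaced by $p$), including the same observations that the disjointness of the index blocks forces the overall factor $\delta_{ij}$ and that brackets involving the generators at infinity are trivially preserved because their images are constants. The only cosmetic difference is that you carry out the Poisson-bracket calculation directly in the classical setting rather than deferring to the quantum case.
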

\begin{proof} We will prove the corresponding result in the quantum case in detail below. See Lemma~\ref{lem: pi tilde pi quant b}. That proof applies line-by-line here, with $\del$ replaced by $p$.
\end{proof}

Let $\CC(z)[\lambda]$ denote the algebra of polynomials in $\lambda$ with coefficients rational in $z$. Given any Poisson algebra $\mathcal P$ we introduce the Poisson algebra $\mathcal P(z)[\lambda] \coloneqq \mathcal P \otimes \CC(z)[\lambda]$ with Poisson bracket defined using multiplication in the second tensor factor.
Extend the homomorphisms $\pic_{\mathsf b}$ and $\pict_{\mathsf b}$ from Lemma \ref{lem: pic pict} to homomorphisms of Poisson algebras
\begin{gather*}
\pic_{\mathsf b}\colon \ S\big(\mathfrak{gl}_M^\D\big)(z)[\lambda] \longrightarrow \P_{\mathsf b}(z)[\lambda], \qquad
\pict_{\mathsf b}\colon \ S\big(\mathfrak{gl}_M^{\tilde \D}\big)(\lambda)[z] \longrightarrow \P_{\mathsf b}(\lambda)[z],
\end{gather*}
by letting them act trivially on the tensor factors $\CC(z)[\lambda]$ and $\CC(\lambda)[z]$, respectively. In particular, we may apply these homomorphisms respectively to the expressions~\eqref{spectral curve glM} and~\eqref{spectral curve glN}. It follows from Theorem~\ref{thm: cla bispec bos} below that the resulting expressions in fact live in the common subalgebra $\P_{\mathsf b}[z, \lambda] \coloneqq \P_{\mathsf b} \otimes \CC[z, \lambda]$ of both $\P_{\mathsf{b}}(z)[\lambda]$ and $\P_{\mathsf{b}}(\lambda)[z]$, where $\CC[z, \lambda]$ denotes the algebra of polynomials in the variables $z$ and $\lambda$. The coefficients of these polynomials in $\P_{\mathsf{b}}[z, \lambda]$ span the images of the classical Gaudin algebras in $\P_{\mathsf b}$, namely
\begin{gather*} \pic_{\mathsf b}\big(\mathscr Z^{{\rm cl}}\big(\gl^\D_M\big)\big) \subset \P_\mathsf{b}\qquad\text{and}\qquad
\pict_{\mathsf b}\big(\mathscr Z^{\rm cl}\big(\gl^{\tilde \D}_N\big)\big) \subset \P_\mathsf{b},\end{gather*}
respectively. The following theorem establishes that these Poisson-commutative subalgebras of~$\P_{\mathsf b}$ coincide.

\begin{Theorem} \label{thm: cla bispec bos}
We have the following relation
\begin{gather*}
\pic_{\mathsf b}\left( \prod_{i=1}^n (z - z_i)^{\tau_i} \det\big( \lambda {\bf 1}_{M \times M} - \mc L^\D(z) \big) \right) = \pict_{\mathsf b}\left( \prod_{a=1}^m (\lambda - \lambda_a)^{\tilde\tau_a} \det\big( z {\bf 1}_{N \times N} - \mc L^{\tilde\D}(\lambda) \big) \right),
\end{gather*}
as an equality in $\P_{\mathsf b}[z, \lambda]$.
\end{Theorem}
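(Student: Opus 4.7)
\textbf{Proof plan for Theorem \ref{thm: cla bispec bos}.}

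The strategy, as hinted in the introduction, is to exhibit both sides of the identity as two Schur-complement evaluations of a single block determinant of size $(M+N)\times(M+N)$, whose off-diagonal blocks are assembled from the Weyl generators and whose diagonal blocks are assembled from the Jordan block matrices $J_{\tilde\tau_c}(\lambda-\lambda_c)$ and $J_{\tau_i}(z-z_i)$. The key analytic input is formula \eqref{J block inv} for the inverse of a Jordan block: the subdiagonal decay of $J_k(x)^{-1}$ in powers of $x^{-1}$ is exactly what converts a Takiff generator of depth $r$ into a pole of order $r+1$.

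\textbf{Step 1: setting up the block matrix.} Introduce the $M\times M$ and $N\times N$ block-diagonal matrices
\begin{gather*}
A(\lambda) := \bigoplus_{c=1}^{m} J_{\tilde\tau_c}(\lambda - \lambda_c), \qquad B(z) := \bigoplus_{i=1}^{n} J_{\tau_i}(z - z_i),
\end{gather*}
and the $M\times N$ matrices $X := \big(x^a_i\big)_{a,i}$ and $P := \big(p^a_i\big)_{a,i}$ with entries in $\P_{\mathsf b}$. One has $\det A(\lambda) = \prod_{c}(\lambda-\lambda_c)^{\tilde\tau_c}$ and $\det B(z) = \prod_i(z-z_i)^{\tau_i}$. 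Consider the block matrix
\begin{gather*}
\mathcal{M}(z,\lambda) := \begin{pmatrix} A(\lambda)^{\trp} & X \\ P^{\trp} & B(z) \end{pmatrix},
\end{gather*}
with entries in $\P_{\mathsf b}[z,\lambda]$.

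\textbf{Step 2: identifying the two Schur complements with the two Lax matrices.} Using $\pic_{\mathsf b}(\mathsf E^{(\infty)}_{ab\ul 1}) = -(A(0))_{ba}$, the constant-in-$z$ part of $\pic_{\mathsf b}(\mc L^\D(z))$ assembles (after a transposition of indices) into $-A(0)^{\trp}$, while the rational part, $\sum_{i,r,u}(z-z_i)^{-(r+1)} x^a_{u+r}p^b_u$, matches $\big((XB(z)^{-1}P^{\trp})^{\trp}\big)_{ba}$ by virtue of \eqref{J block inv}: the entry of $J_{\tau_i}(z-z_i)^{-1}$ at subdiagonal position $r$ is precisely $(z-z_i)^{-(r+1)}$, and the summation range over $u$ matches the range of valid entries in that block. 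Taking the transpose and adding $\lambda I_M$ therefore yields
\begin{gather*}
\big(\lambda I_M - \pic_{\mathsf b}(\mc L^\D(z))\big)^{\trp} = A(\lambda)^{\trp} - X B(z)^{-1} P^{\trp}.
\end{gather*}
An entirely symmetric bookkeeping (with the roles of $x^a_i$, $p^a_i$ and of $(z,z_i,\tau_i)$, $(\lambda,\lambda_a,\tilde\tau_a)$ exchanged, the different ordering $p^u_j x^{u+s}_i$ being immaterial as $\P_{\mathsf b}$ is commutative) gives
\begin{gather*}
z I_N - \pict_{\mathsf b}(\mc L^{\tilde\D}(\lambda)) = B(z) - P^{\trp} A(\lambda)^{-\trp} X.
\end{gather*}

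\textbf{Step 3: two Schur-complement expansions.} Expanding $\det \mathcal M$ along the bottom-right block, which is invertible in $\P_{\mathsf b}(z)$, and then along the top-left block, which is invertible in $\P_{\mathsf b}(\lambda)$, gives
\begin{gather*}
\det B(z)\cdot\det\big(A(\lambda)^{\trp} - X B(z)^{-1} P^{\trp}\big) = \det \mathcal M = \det A(\lambda)^{\trp}\cdot\det\big(B(z) - P^{\trp} A(\lambda)^{-\trp} X\big).
\end{gather*}
Substituting Step~2 together with $\det A(\lambda)=\prod_c(\lambda-\lambda_c)^{\tilde\tau_c}$, $\det B(z)=\prod_i(z-z_i)^{\tau_i}$, and using invariance of the determinant under transposition yields exactly the claimed equality as rational functions in $z,\lambda$. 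Since both sides manifestly lie in $\P_{\mathsf b}[z,\lambda]$, the identity holds there.

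\textbf{Main obstacle.} The real work is the index bookkeeping in Step~2, i.e.\ checking that the sum $\sum_{u=\nu_i+1}^{\nu_i+\tau_i-r} x^a_{u+r}p^b_u$ assembles block-by-block into $X B(z)^{-1} P^{\trp}$ via \eqref{J block inv}; everything afterwards is the classical Schur-complement identity for a commutative block matrix, which applies because the entries of $\mathcal M$ all lie in the commutative algebra $\P_{\mathsf b}[z,\lambda]$ and $A(\lambda)$, $B(z)$ are invertible over the respective localizations.
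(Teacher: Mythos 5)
Your proposal is correct and follows essentially the same route as the paper's proof: the block matrix $\mathcal M(z,\lambda)$ you construct is exactly the paper's matrix $M$ (with $\Lambda = A(\lambda)^{\trp}$ and $Z = B(z)$), and the argument proceeds identically via the two Schur-complement expansions of $\det\mathcal M$, the Jordan-block inverse formula \eqref{J block inv} to match the Schur complements with $\pic_{\mathsf b}\big(\trp\mc L^\D(z)\big)$ and $\pict_{\mathsf b}\big(\mc L^{\tilde\D}(\lambda)\big)$, and $\det\trp A=\det A$. The index bookkeeping you defer in Step~2 is precisely the computation the paper carries out explicitly, and your description of how it goes through is accurate.
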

\begin{proof}
Introduce the $M \times M$ and $N \times N$ block diagonal matrices
\begin{gather*}\Lambda \coloneqq \bigoplus_{a = 1}^m \trp J_{\tilde\tau_a}(\lambda - \lambda_a),\qquad
Z \coloneqq \bigoplus_{i = 1}^n J_{\tau_i}(z - z_i).
\end{gather*}
Also introduce the $M \times N$ matrices
\begin{gather*}
P \coloneqq (p^a_i)_{a = 1\;i=1}^{M \;\;\;\, N},
\qquad
X \coloneqq (x^a_i)_{a = 1\;i=1}^{M \;\;\;\, N}.
\end{gather*}
Consider the block matrix
\begin{gather}\label{clasMdef}
M \coloneqq \left(
\begin{matrix}
 \Lambda & X\\
\trp P & Z
\end{matrix}
\right),
\end{gather}
with entries in the commutative algebra $\mc P_{\mathsf b}[\lambda, z]$. We may evaluate its determinant in two ways.
On the one hand, we have
\begin{align*}
\det M&= \det \left( M \bmx 1 & - \Lambda^{-1} X \\ 0 & 1 \emx \right) \\
&= \det \bmx \Lambda & 0 \\ \trp P & Z- \trp P\Lambda^{-1} X\emx= \det \Lambda \det\left( Z- \trp P\Lambda^{-1} X \right). \end{align*}
On the other hand,
\begin{align*}
\det M &= \det \bmx Z & \trp P\\ X & \Lambda \emx
= \det \left( \bmx Z & \trp P\\ X & \Lambda \emx
 \bmx 1 & - Z^{-1} \trp P \\ 0 & 1 \emx \right) \\
&= \det \bmx Z & 0 \\ X & \Lambda- X Z^{-1} \trp P\emx
= \det Z \det\big( \Lambda- X Z^{-1} \trp P \big). \end{align*}
Hence we obtain the relation
\begin{gather} \label{clas bos master eq}
\det Z \det\big(\Lambda - X Z^{-1} \trp P\big) = \det \Lambda \det\big(Z - \trp P \Lambda^{-1} X\big).
\end{gather}
It remains to note that the square matrices $Z$ and $\Lambda$ can be written as
\begin{gather*}
Z = \sum_{i,j=1}^N \tilde E_{ij} \big( z \delta_{ij} - \pi_{\mathsf b}\big(\tilde{\mathsf E}^{(\infty)}_{ij \ul 1}\big) \big), \qquad
\Lambda = \sum_{a,b=1}^M E_{ab} \big( \lambda \delta_{ab} - \pi_{\mathsf b}\big(\mathsf E^{(\infty)}_{ba \ul 1}\big) \big)
\end{gather*}
with $\pi_{\mathsf b}$ and $\tilde\pi_{\mathsf b}$ as defined in Lemma \ref{lem: pic pict}, and that their inverses are given by
\begin{gather*}
Z^{-1} = \bigoplus_{i = 1}^n J_{\tau_i}(z - z_i)^{-1}, \qquad
\Lambda^{-1} = \bigoplus_{a = 1}^m \trp J_{\tilde\tau_a}(\lambda - \lambda_a)^{-1}.
\end{gather*}
Thus we have
\begin{align*}
\Lambda - X Z^{-1} \trp P &= \sum_{a,b = 1}^M E_{ab} \big(\Lambda - X Z^{-1} \trp P\big)_{ab}\\
& = \lambda {\bf 1} - \sum_{a, b= 1}^M E_{ab} \left( \pic_{\mathsf b}\big(\mathsf E^{(\infty)}_{ab \ul 1}\big) + \sum_{i=1}^n \sum_{j, k = \nu_i+1}^{\nu_i+ \tau_i}
x^a_j \big( J_{\tau_i}(z - z_i)^{-1} \big)_{jk} p^b_k \right),
\end{align*}
which is nothing but $\lambda {\bf 1} - \pic_{\mathsf b}\big( \trp \mc L^\D(z) \big)$ using Lemma~\ref{lem: pi tilde pi quant b}, the expression \eqref{formal Lax glM} for the Lax matrix~$\mc L^\D(z)$ and~\eqref{J block inv} for the inverse of a Jordan block. Likewise
\begin{align*}
Z - \trp P \Lambda^{-1} X &= \sum_{i, j = 1}^N \tilde E_{ij} \big(Z - \trp P \Lambda^{-1} X\big)_{ij}\\
& = z {\bf 1} - \sum_{i,j = 1}^N \tilde E_{ij} \left( \pict_{\mathsf b}\big( \tilde {\mathsf E}^{(\infty)}_{ji \ul 1} \big) + \sum_{a = 1}^m
\sum_{b,c = \tilde\nu_a+1}^{\tilde\nu_a+ \tilde\tau_a} p^b_i \big( J_{\tilde\tau_a}(\lambda - \lambda_a)^{-1} \big)_{cb} x^c_j \right),
\end{align*}
which coincides with $z {\bf 1} - \pict_{\mathsf b} \big( \tilde{\mc L}(\lambda) \big)$, as required. Since $\det \trp A = \det A$ for any square matrix $A$ and noting that $\det Z = \prod\limits_{i=1}^n (z - z_i)^{\tau_i}$ and $\det \Lambda = \prod\limits_{a=1}^m (\lambda - \lambda_a)^{\tilde\tau_a}$, the result follows.
\end{proof}

\subsection{Fermionic realisation}

Let $V \coloneqq \Span_\CC \big\{ \psi^a_i, \pi^b_j \big\}_{i,j=1\; a,b=1}^{N\quad\;\, M}$ and define the exterior algebra
$\P_{\mathsf f} \coloneqq \bigwedge V = \bigoplus_{k=0}^{2 M N} \bigwedge^k V$,
whose skew-symmetric product we denote simply by juxtaposition. We refer to an element $u \in \bigwedge^k V$ as being homogeneous of degree $k$ and write $|u| = k$. In particular, $|\psi^a_i| = |\pi^a_i| = 1$ for any $a = 1, \ldots, M$ and $i = 1, \ldots, N$. We endow $\P_{\mathsf f}$ with a $\ZZ_2$-graded Poisson structure defined by
\begin{gather*}
\big\{ \pi^a_i, \psi^b_j \big\}_+ = \big\{ \psi^b_j, \pi^a_i \big\}_+ = \delta_{ij} \delta_{ab},
\end{gather*}
for any $a, b = 1, \ldots, M$ and $i, j = 1, \ldots, N$, and extended to the whole of $\P_{\mathsf f}$ by the $\ZZ_2$-graded skew-symmetry property and the $\ZZ_2$-graded Leibniz rule, i.e.,
\begin{gather*}
\{ u, v \}_+ = - (-1)^{|u| |v|} \{ v, u \}_+,\\
\{ u, v w \}_+ = \{ u, v \}_+ w + (-1)^{|u| |v|} v \{ u, w \}_+
\end{gather*}
for any homogeneous elements $u, v, w \in \P_{\mathsf f}$.

Let $\P_{\mathsf f}^{\bar 0} \coloneqq \bigoplus_{k=0}^{M N} \bigwedge^{2k} V$ denote the even subspace of $\P_{\mathsf f}$. The restriction of the $\ZZ_2$-graded Poisson bracket $\{ \cdot, \cdot \}_+$ to $\P_{\mathsf f}^{\bar 0}$ defines a Lie algebra structure on $\P_{\mathsf f}^{\bar 0}$.

\begin{Lemma} \label{lem: pi tilde pi ferm}
The linear maps $\pic_{\mathsf f}\colon \mathfrak{gl}_M^\D \to \P_{\mathsf f}^{\bar 0}$ and $\pict_{\mathsf f}\colon \mathfrak{gl}_N^{\tilde\D} \to \P_{\mathsf f}^{\bar 0}$
defined by
\begin{gather*}
\pic_{\mathsf f}\big(\mathsf E^{(z_i)}_{ab \ul r}\big) = \sum_{u= \nu_i+1}^{\nu_i +\tau_i - r} \pi^a_{u+r} \psi^b_u, \qquad
\pic_{\mathsf f}\big(\mathsf E^{(\infty)}_{ab \ul 1}\big) = - \left( \bigoplus_{c=1}^m J_{\tilde \tau_c}(-\lambda_c) \right)_{ab},
\end{gather*}
for every $i = 1, \ldots, n$ and $a, b = 1,\ldots, M$, and
\begin{gather*}
\pict_{\mathsf f}\big(\tilde{\mathsf E}^{(\lambda_a)}_{ij \ul s} \big) =
\sum_{u= \tilde\nu_a+1}^{\tilde\nu_a + \tilde\tau_a - s} \psi^u_i \pi^{u+s}_j, \qquad
\pict_{\mathsf f}\big(\tilde{\mathsf E}^{(\infty)}_{ij \ul 1} \big) = - \left( \bigoplus_{k=1}^n J_{\tau_k}(-z_k) \right)_{ij},
\end{gather*}
for every $i,j = 1, \ldots, N$ and $a = 1,\ldots, m$, are homomorphisms of Lie algebras.
\end{Lemma}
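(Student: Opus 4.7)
The plan is to mirror the bosonic Lemma~\ref{lem: pic pict} (whose proof is deferred to the quantum Lemma~\ref{lem: pi tilde pi quant b}) and verify that the proposed assignments respect the Takiff relations~\eqref{M takiff rels} together with their $\gl_N^{\tilde\D}$-counterpart. Since $\pic_{\mathsf f}\big(\mathsf E^{(z_i)}_{ab\ul r}\big)\in\bigwedge^2 V$ and $\pict_{\mathsf f}\big(\tilde{\mathsf E}^{(\lambda_a)}_{ij\ul s}\big)\in\bigwedge^2 V$, while the images of $\mathsf E^{(\infty)}_{ab\ul 1}$ and $\tilde{\mathsf E}^{(\infty)}_{ij\ul 1}$ are scalars, both maps land in $\P_{\mathsf f}^{\bar 0}$ and the central generators automatically $\{\cdot,\cdot\}_+$-commute with everything, matching the fact that they are Casimirs of $\gl_M^\D$ and $\gl_N^{\tilde\D}$. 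The substance therefore lies in the brackets among the $\pic_{\mathsf f}\big(\mathsf E^{(z_i)}_{ab\ul r}\big)$ (and analogously for $\pict_{\mathsf f}$).

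First I would compute the elementary building block
\begin{gather*}
\big\{\pi^a_{u+r}\psi^b_u,\,\pi^c_{v+s}\psi^d_v\big\}_+.
\end{gather*}
Since both arguments are of even exterior degree, the graded Leibniz rule and graded skew-symmetry introduce only trivial signs; applying them twice and using the single non-vanishing elementary bracket $\{\pi^a_p,\psi^b_q\}_+=\delta_{pq}\delta_{ab}$ produces
\begin{gather*}
\delta_{cb}\,\delta_{v+s,\,u}\,\pi^a_{u+r}\psi^d_v \;-\; \delta_{ad}\,\delta_{v,\,u+r}\,\pi^c_{v+s}\psi^b_u.
\end{gather*}
This is the direct fermionic analogue of the identity that drives the bosonic proof after moving the $p$'s past the $x$'s.

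Next I would sum this identity over $u\in\{\nu_i+1,\dots,\nu_i+\tau_i-r\}$ and $v\in\{\nu_j+1,\dots,\nu_j+\tau_j-s\}$. When $i\neq j$ both Kronecker deltas vanish, because the index blocks $\{\nu_i+1,\dots,\nu_i+\tau_i\}$ and $\{\nu_j+1,\dots,\nu_j+\tau_j\}$ are disjoint; this reproduces the overall $\delta_{ij}$ in~\eqref{M takiff rels}. When $i=j$, substituting $u=v+s$ in the first term (resp.\ $v=u+r$ in the second) and tracking the resulting range $\nu_i+1\leq\,\cdot\,\leq \nu_i+\tau_i-(r+s)$ reads off $\pic_{\mathsf f}\big(\mathsf E^{(z_i)}_{ad\ul{r+s}}\big)$ and $\pic_{\mathsf f}\big(\mathsf E^{(z_i)}_{cb\ul{r+s}}\big)$ respectively, with the empty-sum convention correctly encoding the vanishing of $\mathsf E^{(z_i)}_{ab\ul r}$ for $r\geq\tau_i$. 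This recovers~\eqref{M takiff rels} for $\pic_{\mathsf f}$; the verification for $\pict_{\mathsf f}$ is entirely symmetric, now involving $(\tilde\nu_a,\tilde\tau_a)$ in place of $(\nu_i,\tau_i)$ and the swapped role of $\pi$ and $\psi$.

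The only potential obstacle is the sign bookkeeping in the graded Leibniz rule, but the even parity of every monomial involved makes all signs trivial, so the calculation reduces cleanly to its bosonic counterpart.
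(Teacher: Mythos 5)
Your proposal is correct and follows essentially the same route as the paper: compute the elementary bracket $\{\pi^a_{u+r}\psi^b_u,\pi^c_{v+s}\psi^d_v\}_+$ via the graded Leibniz rule (your displayed formula, including the crucial minus sign on the second term, matches the paper's second line), observe that disjointness of the index blocks produces the overall $\delta_{ij}$, reindex to land on $\pic_{\mathsf f}\big(\mathsf E^{(z_i)}_{ad\ul{r+s}}\big)$ and $\pic_{\mathsf f}\big(\mathsf E^{(z_i)}_{cb\ul{r+s}}\big)$, and dispose of the generators at infinity by centrality of scalars. The only nitpick is your remark that evenness makes "all signs trivial": the intermediate Leibniz steps do involve odd-odd sign factors (which is precisely where the minus sign in your second term comes from), but since you state the resulting formula correctly this does not affect the validity of the argument.
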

\begin{proof}
For each $i,j = 1, \ldots, n$ and $a, b = 1,\ldots, M$ we have
\begin{align*}
\big\{ \pic_{\mathsf f}\big(\mathsf E^{(z_i)}_{ab \ul r}\big), \pic_{\mathsf f}\big(\mathsf E^{(z_j)}_{cd \ul s}\big) \big\}_+ &=
\sum_{u= \nu_i+1}^{\nu_i +\tau_i - r}
\sum_{v= \nu_j+1}^{\nu_j +\tau_j - s} \{ \pi^a_{u+r} \psi^b_u, \pi^c_{v+s} \psi^d_v \}_+\\
& = \sum_{u = \nu_i+1}^{\nu_i+\tau_i - r}
\sum_{v= \nu_i+1}^{\nu_i+\tau_i - s} \big( \pi^a_{u+r} \big\{ \psi^b_u, \pi^c_{v+s} \big\}_+ \psi^d_v - \pi^c_{v+s} \big\{ \pi^a_{u+r}, \psi^d_v \big\}_+ \psi^b_u \big) \delta_{ij}\\
&=\sum_{u = \nu_i+1}^{\nu_i + \tau_i - r - s} \big( \delta_{cb} \pi^a_{u+r+s} \psi^d_u - \delta_{ad} \pi^c_{u+r+s} \psi^b_u \big) \delta_{ij}\\
& = \big( \delta_{bc} \pic_{\mathsf f}\big(\mathsf E^{(z_i)}_{ad \ul {r+s}}\big) - \delta_{ad} \pic_{\mathsf f}\big(\mathsf E^{(z_i)}_{cb \ul {r+s}}\big) \big) \delta_{ij} = \pic_{\mathsf f} \big( \big\{ \mathsf E^{(z_i)}_{ab \ul r}, \mathsf E^{(z_j)}_{cd \ul s} \big\} \big).
\end{align*}
Likewise, for each $i,j = 1, \ldots, N$ and $a, b = 1,\ldots, m$ one shows that
\begin{align*}
&\big\{ \pict_{\mathsf f}\big(\tilde{\mathsf E}^{(\lambda_a)}_{ij \ul r}\big), \pict_{\mathsf f}\big(\tilde{\mathsf E}^{(\lambda_b)}_{kl \ul s}\big) \big\}_+ = \pict_{\mathsf f} \big( \big\{ \tilde{\mathsf E}^{(\lambda_a)}_{ij \ul r}, \tilde{\mathsf E}^{(\lambda_b)}_{kl \ul s} \big\} \big),
\end{align*}
and all Poisson brackets involving the generators at infinity are also easily seen to be preserved by the linear maps $\pic_{\mathsf f}$ and $\pict_{\mathsf f}$ since $z_i \in \CC$ and $\lambda_a \in \CC$ are central in $\P_{\mathsf f}^{\bar 0}$.
\end{proof}

\begin{Theorem} \label{thm: cla bispec fer}
We have the following relation
\begin{gather*}
\pic_{\mathsf f}\big( \det\big( \lambda {\bf 1}_{M \times M} - \mc L^\D(z) \big) \big) \pict_{\mathsf f}\big( \det\big( z {\bf 1}_{N \times N} - \mc L^{\tilde \D}(\lambda) \big) \big) = \prod_{i=1}^n (z - z_i)^{\tau_i} \prod_{a=1}^m (\lambda - \lambda_a)^{\tilde\tau_a}.
\end{gather*}
\end{Theorem}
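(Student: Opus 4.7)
The strategy is to mimic the proof of Theorem~\ref{thm: cla bispec bos} after replacing the ordinary determinant by the Berezinian $\sdet$ of an $(M|N) \times (M|N)$ block supermatrix. The bosonic master identity \eqref{clas bos master eq} has a direct super analogue: for a supermatrix with invertible even diagonal blocks, the two standard Schur-complement factorisations together with the multiplicativity of $\sdet$ and the block-diagonal formula $\sdet \bmx A & 0 \\ 0 & D \emx = \det A / \det D$ give
\begin{gather*}
\sdet \bmx A & B \\ C & D \emx = \frac{\det A}{\det(D - C A^{-1} B)} = \frac{\det(A - B D^{-1} C)}{\det D},
\end{gather*}
whence $\det(A - B D^{-1} C)\, \det(D - C A^{-1} B) = \det A \cdot \det D$. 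This is exactly the identity of the theorem in disguise.

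Concretely, I would reuse the even block-diagonal matrices $\Lambda$ and $Z$ from the bosonic proof, assemble the fermionic generators into the odd $M \times N$ matrices $\Pi \coloneqq (\pi^a_i)_{a = 1, i = 1}^{M, N}$ and $\Psi \coloneqq (\psi^a_i)_{a = 1, i = 1}^{M, N}$, and form the supermatrix $\mc M \coloneqq \bmx \Lambda & \Pi \\ \trp \Psi & Z \emx$. Applying the identity above with $A = \Lambda$, $B = \Pi$, $C = \trp \Psi$, $D = Z$ gives
\begin{gather*}
\det\big( \Lambda - \Pi Z^{-1} \trp \Psi \big) \, \det\big( Z - \trp \Psi \Lambda^{-1} \Pi \big) = \det \Lambda \cdot \det Z.
\end{gather*}
A line-by-line repeat of the identification at the end of the bosonic proof, using Lemma~\ref{lem: pi tilde pi ferm}, the explicit inverse \eqref{J block inv} and the Lax matrices \eqref{formal Lax glM glN Takiff}, with $x^a_i \leftrightarrow \pi^a_i$ and $p^a_i \leftrightarrow \psi^a_i$ throughout, then shows
\begin{gather*}
\Lambda - \Pi Z^{-1} \trp\Psi = \lambda \mathbf 1 - \pic_{\mathsf f}\big( \trp \mc L^\D(z) \big), \qquad Z - \trp\Psi \Lambda^{-1} \Pi = z \mathbf 1 - \pict_{\mathsf f}\big( \trp \mc L^{\tilde\D}(\lambda) \big).
\end{gather*}
Taking ordinary determinants (invariant under transposition), noting that the two resulting factors both live in the commutative even subalgebra $\P_{\mathsf f}^{\bar 0}$ so that their product is unambiguous, and using $\det \Lambda = \prod_a (\lambda - \lambda_a)^{\tilde\tau_a}$ together with $\det Z = \prod_i (z - z_i)^{\tau_i}$, one reads off the claim.

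The only genuinely delicate point is that this argument takes place in the superalgebra $\P_{\mathsf f}$ rather than over a commutative ring, so the Schur-complement factorisations and the Berezinian identities invoked above must be verified in the super setting. What makes this essentially free is that $\Lambda^{-1}$ and $Z^{-1}$ have scalar (hence central) entries, so every matrix product involving them is unambiguous and no Grassmann sign corrections appear. Thus the proof is really a cosmetic modification of the bosonic one, as the authors anticipate in the paragraph preceding the theorem.
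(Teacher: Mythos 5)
Your proposal is correct and follows essentially the same route as the paper: form the even supermatrix $\begin{pmatrix} \Lambda & \Pi \\ \Psi & Z \end{pmatrix}$, equate the two standard expressions for its Berezinian to obtain $\det\big(\Lambda - \Pi Z^{-1}\Psi\big)\det\big(Z - \Psi\Lambda^{-1}\Pi\big) = \det\Lambda\,\det Z$, and identify the two Schur complements with the images of the Lax matrices under $\pi_{\mathsf f}$ and $\tilde\pi_{\mathsf f}$. The minor differences (whether $\Psi$ is taken as $N\times M$ or as the transpose of an $M\times N$ matrix, and which factors carry a transpose of the Lax matrix) are immaterial since determinants are transpose-invariant, and your observation that the entries of $Z^{-1}$ and $\Lambda^{-1}$ are central is exactly what makes the super Schur-complement manipulations go through.
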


\begin{proof}
Consider the same $M \times M$ and $N \times N$ block diagonal matrices $Z$ and $\Lambda$ as in the proof of Theorem~\ref{thm: cla bispec bos}. Introduce the $M \times N$ and $N \times M$ matrices
\begin{gather*}
\Pi \coloneqq (\pi^a_i)_{a=1 \; i = 1}^{M \;\;\;\, N}, \qquad \Psi \coloneqq (\psi^a_i)_{i=1 \; a = 1}^{N \;\;\;\, M},
\end{gather*}
and consider the following even supermatrix
\begin{gather*}
M \coloneqq \left(
\begin{array}{c|c}
\Lambda & \Pi\\
\hline
\Psi & Z
\end{array}
\right).
\end{gather*}
Since $\Lambda$ and $Z$ are both invertible, we can define the Berezinian, or superdeterminant, of $M$ which is given by $\operatorname{Ber} M = \det \Lambda \big( \det\big(Z - \Psi \Lambda^{-1} \Pi\big) \big)^{-1}$. Alternatively, the Berezinian of $M$ can equally be expressed as $\operatorname{Ber} M = \det\big(\Lambda - \Pi Z^{-1} \Psi\big) (\det Z)^{-1}$, see for instance~\cite{MR748771}. Equating these two expressions of $\operatorname{Ber} M$ we obtain the relation
\begin{gather*}
\det\big(\Lambda - \Pi Z^{-1} \Psi\big) \det\big(Z - \Psi \Lambda^{-1} \Pi\big) = \det Z \det \Lambda.
\end{gather*}

Recalling the expressions for the square matrices $Z$ and $\Lambda$ and their inverses given in the proof of Theorem~\ref{thm: cla bispec bos}, we can write
\begin{align*}
\Lambda - \Pi Z^{-1} \Psi &= \sum_{a,b = 1}^M E_{ab} \big(\Lambda - \Pi Z^{-1} \Psi\big)_{ab}\\
& = \lambda {\bf 1} - \sum_{a, b= 1}^M E_{ab} \left( \pic_{\mathsf f}\big(\mathsf E^{(\infty)}_{ba \ul 1}\big) + \sum_{i=1}^n
\sum_{j, k = \nu_i+1}^{\nu_i + \tau_i} \pi^a_j \big( J_{\tau_i}(z - z_i)^{-1} \big)_{jk} \psi^b_k \right),
\end{align*}
which is nothing but $\lambda {\bf 1} - \pic_{\mathsf f}\big( \mathcal L^\D(z) \big)$. Likewise
\begin{align*}
Z - \Psi \Lambda^{-1} \Pi & = \sum_{i, j = 1}^N E_{ij} \big(Z - \Psi \Lambda^{-1} \Pi\big)_{ij}\\
& = z {\bf 1} - \sum_{i,j = 1}^N E_{ij} \left( \pict_{\mathsf f}\big( \mathsf E^{(\infty)}_{ij \ul 1} \big) + \sum_{a = 1}^m
\sum_{b,c = \tilde\nu_a+1}^{\tilde\nu_a + \tilde\tau_a} \psi^b_i \big( J_{\tilde\tau_a}(\lambda - \lambda_a)^{-1} \big)_{cb} \pi^c_j \right),
\end{align*}
which is $z {\bf 1} - \pict_{\mathsf f}\big( \trp \mathcal L^{\tilde \D}(\lambda) \big)$.
The result now follows as in the proof of Theorem \ref{thm: cla bispec bos}.
\end{proof}

\section[Quantum $(\gl_M, \gl_N)$-duality]{Quantum $\boldsymbol{(\gl_M, \gl_N)}$-duality} \label{sec: q}

There is a natural quantum version of Theorem~\ref{thm: cla bispec bos}. In order to state it, we first need a short digression on Manin matrices. In this section we do not consider the fermionic counterpart of Theorem~\ref{thm: cla bispec bos}, namely Theorem~\ref{thm: cla bispec fer}, but leave this for future work.

\subsection{Manin matrices}\label{sec: manin matrices}

Let $\mc A$ be an associative (but possibly noncommutative) algebra over $\CC$. Suppose $M=(M_{ij})$ is a matrix with entries in $\mc A$.
\begin{Definition}\label{def: Manin}
The matrix $M$ is a \emph{Manin matrix} if
\begin{enumerate}[(i)]\itemsep=0pt
\item $[M_{ij}, M_{kj}] = 0$ for all $i$, $j$, $k$, and
\item $[M_{ij},M_{kl}] = [M_{kj},M_{il}]$ for all $i$, $j$, $k$, $l$.
\end{enumerate}
\end{Definition}
That is, elements of the same column must commute amongst themselves, and commutators of cross terms of $2\times 2$ submatrices must be equal (for example $[M_{11},M_{22}] = [M_{21},M_{12}]$). Actually the second of these conditions implies the first (set $j=l$) but it is convenient to think of them separately.

In the literature Manin matrices have been also called \emph{right quantum} matrices \cite{K1,K2,KP,MR} or \emph{row-pseudo-commutative} matrices~\cite{CS}. For a review of their properties, and further references, see~\cite{CFRi}.

\begin{Definition}\label{def: cdet}
The \emph{column$($-ordered$)$ determinant} of an $N\times N$ matrix $M$ is
\begin{gather*} \cdet M \coloneqq \sum_{\sigma\in S_N} (-1)^{|\sigma|} M_{\sigma(1) 1} M_{\sigma(2) 2} \cdots M_{\sigma(N) N}. \end{gather*}
\end{Definition}
\begin{Lemma}\label{lem: col perms}
The column determinant $\cdet M$ changes only by a sign under the exchange of any two rows of~$M$. If $M$ is Manin, then $\cdet M$ also changes only by a sign under the exchange of any two columns of~$M$.
\end{Lemma}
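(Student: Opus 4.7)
The plan is to verify the two statements directly from Definitions \ref{def: Manin} and \ref{def: cdet}, with the first being a purely formal reindexing and the second relying on a pairing argument that ultimately reduces to the Manin condition applied to $2\times 2$ submatrices.

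For the row-exchange claim, suppose $\tau = (i\,k) \in S_N$ is the transposition exchanging rows $i$ and $k$, and let $M'$ denote the resulting matrix, so that $M'_{\ell, j} = M_{\tau(\ell), j}$. Then
\begin{gather*}
\cdet M' = \sum_{\sigma \in S_N} (-1)^{|\sigma|} M_{\tau\sigma(1), 1} M_{\tau\sigma(2), 2} \cdots M_{\tau\sigma(N), N} = -\cdet M,
\end{gather*}
after reparametrizing the sum by $\sigma' = \tau\sigma$ and using $|\sigma'| = |\sigma| + 1$. Crucially, no commutativity is required: the column-ordered product is dictated entirely by the second index, which the reparametrization leaves alone.

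For the column-exchange claim, it suffices by decomposition into adjacent transpositions to treat the exchange of columns $p$ and $p+1$. Let $M'$ be the resulting matrix, so that $M'_{i,p} = M_{i, p+1}$, $M'_{i, p+1} = M_{i, p}$, and $M'_{i, j} = M_{i, j}$ for $j \neq p, p+1$. Partition $S_N$ into the pairs $\{\sigma, \sigma \cdot (p\,p{+}1)\}$ and collect the contribution of each such pair to $\cdet M + \cdet M'$. Writing $X \coloneqq M_{\sigma(1), 1} \cdots M_{\sigma(p-1), p-1}$, $Y \coloneqq M_{\sigma(p+2), p+2} \cdots M_{\sigma(N), N}$, and abbreviating $a = M_{\sigma(p), p}$, $b = M_{\sigma(p), p+1}$, $c = M_{\sigma(p+1), p}$, $d = M_{\sigma(p+1), p+1}$, a short calculation shows that the paired contribution to $\cdet M + \cdet M'$ equals
\begin{gather*}
(-1)^{|\sigma|} X \big( (ad - cb) + (bc - da) \big) Y = (-1)^{|\sigma|} X \big( [a,d] - [c,b] \big) Y,
\end{gather*}
using that $X$ and $Y$ are common to both permutations of the pair. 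By condition (ii) of Definition \ref{def: Manin} applied to the rows $\sigma(p), \sigma(p+1)$ and columns $p, p+1$, we have $[a,d] = [c,b]$ in $\mathcal A$, so the bracket vanishes in $\mathcal A$ and hence the paired contribution is zero. Summing over all pairs gives $\cdet M + \cdet M' = 0$.

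The argument is essentially routine once one sets it up correctly; the only point requiring care is the sign bookkeeping and the observation that the prefactor $X$ and the suffix $Y$ are identical for the two members of each pair, which is what allows the Manin identity $[a,d] - [c,b] = 0$ (an identity in $\mathcal A$, not merely up to $X$ and $Y$) to kill the pair's net contribution without any further commutativity assumptions.
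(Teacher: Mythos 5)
Your argument is correct, but it is worth noting that the paper does not actually prove this lemma: it declares the row statement ``manifest'' and refers to \cite[Section~3.4]{CFRi} for the column statement. What you have written is essentially the self-contained version of that cited argument. The row part is exactly the reindexing $\sigma \mapsto \tau\sigma$, which works with no commutativity because the column-ordered product is governed by the second index. For the column part, your pairing of $\sigma$ with $\sigma\cdot(p\,p{+}1)$ and the identification of the paired contribution as $(-1)^{|\sigma|}X([a,d]-[c,b])Y$ is the standard and correct way to see that condition (ii) of Definition~\ref{def: Manin} is precisely what is needed. One small point you gloss over: reducing a general column transposition to adjacent ones requires that the matrices obtained at the intermediate stages are still Manin, since your adjacent-swap lemma assumes its input is Manin. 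This is immediate --- both conditions in Definition~\ref{def: Manin} are stable under any permutation of the column indices, as one sees by substituting $M'_{ij}=M_{i\pi(j)}$ into (i) and (ii) --- but it should be said, because the direct pairing argument does \emph{not} extend to a non-adjacent transposition (the entries from the intervening columns would sit between $a$ and $d$ in the ordered product and the commutators would no longer factor out cleanly). With that one-line check added, your proof is complete and has the advantage over the paper of not outsourcing the key step.
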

\begin{proof} The first part is manifest. See \cite[Section~3.4]{CFRi} for the second.\end{proof}

\begin{Proposition}\label{prop: X block} Let $M$ be an $N\times N$ Manin matrix with coefficients in $\mc A$. Let $X$ be a~$k \times(N-k)$ matrix with coefficients in~$\mc A$, for some $0\leq k\leq N$. Then
\begin{gather*} \cdet M = \cdet \left(M \bmx 1 & X \\ 0 & 1 \emx\right). \end{gather*}
\end{Proposition}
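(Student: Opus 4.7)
The plan is to reformulate the column determinant via an auxiliary Grassmann algebra and exploit the anticommutativity implied by the Manin axioms. First I would introduce anticommuting variables $\xi_1,\dots,\xi_N$ that commute elementwise with $\mc A$, work inside $\mc A \otimes \bigwedge V$ where $V = \Span_\CC\{\xi_1,\dots,\xi_N\}$, and set $e_j \coloneqq \sum_{i=1}^N M_{ij}\xi_i$ for $j = 1, \ldots, N$. Direct expansion (using antisymmetry of $\xi_i\xi_k$) gives
\begin{gather*}
e_j^2 = \sum_{i<k}[M_{ij},M_{kj}]\xi_i\xi_k, \qquad e_j e_l + e_l e_j = \sum_{i<k}\big([M_{ij},M_{kl}] - [M_{kj}, M_{il}]\big)\xi_i\xi_k \ (j\neq l).
\end{gather*}
The two Manin axioms of Definition \ref{def: Manin} are therefore equivalent to the Grassmann-like relations $e_j^2 = 0$ and $e_j e_l = -e_l e_j$ for $j \neq l$. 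Extracting the coefficient of $\xi_1\xi_2\cdots\xi_N$ in $e_1 e_2 \cdots e_N$ then yields the pivotal identity
\begin{gather*}
e_1 e_2 \cdots e_N = \cdet M \cdot \xi_1\xi_2\cdots\xi_N.
\end{gather*}

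Next I would set $M' \coloneqq M \bmx 1 & X \\ 0 & 1 \emx$ and $e'_j \coloneqq \sum_i M'_{ij}\xi_i$. The block upper-triangular structure immediately gives $e'_j = e_j$ for $j \leq k$ and
\begin{gather*}
e'_{k+j} = e_{k+j} + \sum_{l=1}^k e_l\, X_{l,j}, \qquad 1 \leq j \leq N-k,
\end{gather*}
where crucially the $X_{l,j}$ appear to the right of $e_l$, because they sit to the right of the $M$-factors in the matrix product. By the identity above, the proposition reduces to showing $e_1\cdots e_k \cdot e'_{k+1} \cdots e'_N = e_1 e_2 \cdots e_N$ inside $\mc A\otimes\bigwedge V$.

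Finally, I would prove this last equality by induction on $j = 1,\dots,N-k$, establishing at each step that $e_1\cdots e_{k+j-1}\cdot e'_{k+j} = e_1\cdots e_{k+j}$. The leading $e_{k+j}$ term in $e'_{k+j}$ produces the right-hand side, so it remains to show each cross term $e_1\cdots e_{k+j-1}\cdot e_l\, X_{l,j}$ with $1\leq l\leq k$ vanishes. Anticommutativity of the $e$'s lets one move the rightmost $e_l$ leftward past the $k+j-1-l$ factors $e_{l+1},\dots,e_{k+j-1}$, yielding $\pm(e_1\cdots e_{l-1})\cdot e_l^2\cdot(e_{l+1}\cdots e_{k+j-1})\cdot X_{l,j}$, which is zero because $e_l^2 = 0$. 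The key point — and arguably the main subtlety — is that since $X_{l,j}$ sits to the right of the already-vanishing product, one never needs to commute $X_{l,j}$ past an $e$-factor, so no hypothesis on the interaction between the entries of $X$ and those of $M$ is required. This closes the induction and completes the proof.
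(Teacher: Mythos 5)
Your proof is correct and complete. Note, though, that the paper itself does not prove this proposition at all: its ``proof'' is the single line ``See [CFRi, Section~5.1]'', so there is nothing in the paper to compare against step by step. What you have written is essentially the standard argument from the Manin-matrix literature (and, in substance, the one in the cited reference): the identity $e_1\cdots e_N=\cdet M\cdot\xi_1\cdots\xi_N$ holds for an \emph{arbitrary} matrix over $\mc A$ and requires no Manin hypothesis, while the two Manin axioms are exactly the statements $e_j^2=0$ and $e_je_l=-e_le_j$; your computations of $e_j^2$ and $e_je_l+e_le_j$ verifying this equivalence are right. The reduction of the claim to $e_1\cdots e_k\,e'_{k+1}\cdots e'_N=e_1\cdots e_N$ is legitimate because the cdet identity applies to $M'$ regardless of whether $M'$ is Manin, and your induction correctly kills each cross term $e_1\cdots e_{k+j-1}e_l X_{l,j}$ by anticommuting the trailing $e_l$ into $e_l^2=0$. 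You have also correctly isolated the one genuinely delicate point: the coefficients $X_{l,j}$ sit to the right of the entire $e$-product, so they never need to be commuted past anything, which is why no compatibility between the entries of $X$ and those of $M$ is required. The argument is self-contained and could stand in place of the citation.
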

\begin{proof}
See \cite[Section~5.1]{CFRi}.\end{proof}

This has the following corollary which will be important for us.
\begin{Proposition}\label{prop: schur complements} Let $M = \bmx A & B \\ C & D \emx$ be the block form of an $N\times N$ Manin matrix with coefficients in $\mc A$.
\begin{enumerate}[$(i)$]\itemsep=0pt
\item Suppose $\mc A$ is a subalgebra of a $($possibly larger$)$ algebra $\mc A'$ over which $A$ has a right inverse, i.e., $AA^{-1} = 1$ for some matrix $A^{-1}$ with coefficients in $\mc A'$. Then
\begin{gather*} \cdet M = \cdet A \cdet \big(D- C A^{-1} B\big) \end{gather*}
as an equality in $\mc A$.
\item Suppose $\mc A$ is a subalgebra of a $($possibly larger$)$ algebra $\mc A''$ over which $D$ has a right inverse, i.e., $DD^{-1} = 1$ for some matrix $D^{-1}$ with coefficients in $\mc A''$. Then
\begin{gather*} \cdet M = \cdet D \cdet\big(A- B D^{-1} C\big) \end{gather*}
as an equality in $\mc A$.
\end{enumerate}
\end{Proposition}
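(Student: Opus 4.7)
The plan is to use Proposition~\ref{prop: X block} to bring $M$ into block-triangular form, whereupon the column determinant factorises directly from the Leibniz formula (Definition~\ref{def: cdet}). The main technical point is that the matrix $X$ we wish to feed into Proposition~\ref{prop: X block} has entries in the enlarged algebra $\mathcal A'$ (or $\mathcal A''$), not in $\mathcal A$; this is resolved by noting that the Manin conditions (Definition~\ref{def: Manin}) are relations among the entries of $M$ alone, so $M$ remains Manin when viewed as a matrix over any algebra containing $\mathcal A$, and Proposition~\ref{prop: X block} applies verbatim over the larger algebra.

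For part $(i)$, take $X := -A^{-1}B \in \operatorname{Mat}_{k \times (N-k)}(\mathcal A')$. Applying Proposition~\ref{prop: X block} over $\mathcal A'$ gives
\begin{gather*}
\cdet M = \cdet\!\left( M \bmx 1 & -A^{-1}B \\ 0 & 1 \emx \right) = \cdet \bmx A & 0 \\ C & D - CA^{-1}B \emx,
\end{gather*}
where we have used the right-inverse relation $AA^{-1}=1$ to kill the upper-right block. In the Leibniz expansion of the right-hand side, any $\sigma \in S_N$ with $\sigma(j) > k$ for some $j \leq k$ picks up an entry from the zero upper-right block and so vanishes. Only permutations preserving the partition $\{1,\dots,k\} \sqcup \{k+1,\dots,N\}$ contribute, and for each such $\sigma$ the column-ordered product $M_{\sigma(1)1}\cdots M_{\sigma(N)N}$ splits cleanly as a product of $A$-entries drawn from columns $1,\dots,k$ followed by $(D-CA^{-1}B)$-entries drawn from columns $k+1,\dots,N$. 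Summing over the two sub-permutations independently yields $\cdet M = \cdet A \cdot \cdet(D - CA^{-1}B)$, and since the left-hand side lies in $\mathcal A$ the identity is in fact an equality in $\mathcal A$.

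For part $(ii)$, reduce to $(i)$ by block-swapping. Let $\pi \in S_N$ be the permutation exchanging the two blocks $\{1,\dots,k\}$ and $\{k+1,\dots,N\}$, and let $M'$ be obtained from $M$ by permuting both its rows and its columns by $\pi$, so that $M' = \bmx D & C \\ B & A \emx$. Since the Manin conditions are pairwise relations among entries whose positions transform equivariantly under simultaneous row and column permutations, $M'$ is still Manin. By Lemma~\ref{lem: col perms} the row permutation contributes a sign $(-1)^{|\pi|}$ to $\cdet$, and the subsequent column permutation (applicable because the row-permuted matrix is still Manin) contributes another $(-1)^{|\pi|}$; these cancel, giving $\cdet M' = \cdet M$. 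Applying part $(i)$ to $M'$, with $D$ now in the upper-left block and using its right inverse in $\mathcal A''$, yields $\cdet M = \cdet M' = \cdet D \cdot \cdet(A - BD^{-1}C)$, as required. The only non-routine step is the verification that simultaneous row and column permutation preserves the Manin property, which is a direct check on Definition~\ref{def: Manin}.
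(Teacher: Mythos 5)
Your proof is correct and follows essentially the same route as the paper's: apply Proposition~\ref{prop: X block} with $X=-A^{-1}B$ over the enlarged algebra, observe the resulting block-triangular column determinant factorises, and descend to $\mathcal A$ since $\cdet M\in\mathcal A$; part $(ii)$ is reduced to part $(i)$ by the same simultaneous row-and-column block swap justified by Lemma~\ref{lem: col perms}. The extra details you supply (the Leibniz-expansion factorisation and the invariance of the Manin conditions under simultaneous permutations and under enlargement of the ambient algebra) are points the paper leaves implicit, and they check out.
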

\begin{proof} We work initially over $\mc A'$. Suppose $A$ has a right inverse. By Proposition~\ref{prop: X block} we have
\begin{align*} \cdet\bmx A & B \\ C & D \emx
&= \cdet\left(\bmx A & B \\ C & D \emx \bmx 1 & -A^{-1} B \\ 0 & 1 \emx \right) \\
&= \cdet \bmx A & 0 \\ C & D- CA^{-1} B \emx
= \cdet A \cdet \big(D- C A^{-1} B\big) \end{align*}
as an equality in $\mc A'$. But $\cdet M$ belongs to $\mc A$, so in fact this is an equality in~$\mc A$. This establishes part~$(i)$.

For part $(ii)$ note that, by Lemma~\ref{lem: col perms}, $\cdet M$ is invariant under the exchange of any pair of rows followed by the exchange of the corresponding pair of columns. So we can rearrange the blocks to find
\begin{gather*} \cdet M = \cdet \bmx D & C \\ B & A \emx\end{gather*}
and then argue as for part $(i)$.
\end{proof}

\begin{Remark} The proposition above is the first half of \cite[Proposition~10]{CFRi}, specifically li\-nes~(5.17) and~(5.18). The subsequent lines (5.19) and (5.20) appear to contain misprints. For example, if $M = \bmx a & b \\ c & d \emx$ is a $2\times 2$ Manin matrix with $d$ invertible then $\cdet M = ad - cb = \big(a- c b d^{-1} \big) d = \big(a- cd^{-1} b\big) d$ whereas \cite[line~(5.20)]{CFRi} gives $\cdet M = \big(a - b d^{-1} c\big) d$, which is not in general the same.
\end{Remark}

\subsection{Quantum Gaudin model} \label{sec: quantum Gaudin bispec}

The algebra of observables of the quantum Gaudin model associated with $\gl^\D_M$ is the enveloping algebra $U\big(\gl^\D_M\big)$, equipped with its usual associative product. Let $\del_z := \frac{\del}{\del z}$ and consider the same Lax matrix given by~\eqref{formal Lax glM}, as in the classical model we considered above but now regarded as taking values in $\gl^\D_M\into U\big(\gl^\D_M\big)$. Its transpose is
\begin{gather*}
\trp \mc L^\D(z) dz = \sum_{a, b=1}^M E_{ab} \otimes \left( \mathsf E^{(\infty)}_{ab \ul 1} + \sum_{i=1}^n \sum_{r=0}^{\tau_i - 1} \frac{\mathsf E^{(z_i)}_{ab \ul r}}{(z - z_i)^{r+1}} \right) dz.
\end{gather*}

Recall the definition of the column-ordered determinant, Definition~\ref{def: cdet}, and consider the quantity
\begin{gather} \label{formal Lax glM quant}
\prod_{i=1}^n (z - z_i)^{\tau_i} \cdet\big(\del_z \mathbf 1_{M\times M} - \trp \mc L^\D(z)\big) =: \sum_{k=0}^M S_k(z) \del_z^k.
\end{gather}
This is a differential operator in $z$ of order~$M$. For each $0\leq k\leq M$, the coefficient $S_k(z)$ of $\del_z^k$ is a rational function in $z$ valued in $U\big(\gl^\D_M\big)$.

The \emph{quantum Gaudin algebra $\mathscr Z\big(\gl^\D_M\big)$} of the $\gl^\D_M$-Gaudin model is by definition the unital subalgebra of $U\big(\gl^\D_M\big)$ generated by the coefficients in the partial fraction decomposition of these rational functions~$S_k(z)$. It is a commutative subalgebra of~$U\big(\gl^\D_M\big)$,~\cite{MTV1, Talalaev}.\footnote{It is shown in~\cite{MTV1} that $\cdet\big(\del_z \mathbf 1_{M\times M} - E_{ab} \ox \sum\limits_{n=0}^\infty (\mathsf E_{ab}\ox t^n) z^{-n-1}\big)$ generates a commutative subalgebra of~$U(\gl_M[t])$. The algebra $\mathscr Z\big(\gl^\D_M\big)$ is a homomorphic image of this algebra in~$U\big(\gl^\D_M\big)$.}

The quantum Gaudin algebra $\mathscr Z\big(\gl^{\tilde \D}_N\big)$ of the $\gl^{\tilde \D}_N$-Gaudin model is defined in exactly the same way in terms of the $N^{\rm th}$ order differential operator in~$\lambda$,
\begin{gather*} \prod_{a=1}^m (\lambda - \lambda_a)^{\tilde \tau_a} \cdet \big(\del_\lambda \mathbf 1_{N\times N} - \trp \mc L^{\tilde \D}(\lambda)\big),\end{gather*}
where, cf.~\eqref{formal Lax glN},
\begin{gather*}
\trp \mc L^{\tilde \D}(\lambda) d\lambda = \sum_{i, j=1}^N \tilde{E}_{ij} \otimes \left( \tilde{\mathsf E}^{(\infty)}_{ij \ul 1} + \sum_{a=1}^m \sum_{s=0}^{\tilde \tau_a - 1} \frac{\tilde{\mathsf E}^{(\lambda_a)}_{ij \ul s}}{(\lambda - \lambda_a)^{s+1}} \right) d\lambda.
\end{gather*}
There is an automorphism of $\gl^\D_N$ defined by $\mc L^{\tilde \D}(\lambda) \mapsto -\trp \mc L^{\tilde \D}(\lambda)$. The Gaudin algebra is stabilized by this automorphism. (This statement follows from applying a tensor product of evaluation homomorphisms of Takiff algebras to the statement of \cite[Proposition~8.4]{MTV1}). Therefore we may equivalently consider the $N^{\rm th}$ order differential operator
\begin{gather} \prod_{a=1}^m (\lambda - \lambda_a)^{\tilde \tau_a} \cdet \big(\del_\lambda \mathbf 1_{N\times N} + \mc L^{\tilde \D}(\lambda)\big) =: \sum_{k=0}^N \tilde S_k(\lambda)\del_\lambda^k\label{ccd}\end{gather}
and define the quantum Gaudin algebra $\mathscr Z\big(\gl^{\tilde \D}_N\big)$ to be the unital subalgebra of $U\big(\gl^{\tilde \D}_N\big)$ generated by the coefficients in the partial fraction decomposition of the rational functions $\tilde S_k(\lambda)$ in $\lambda$. It is a commutative subalgebra of $U\big(\gl^{\tilde \D}_N\big)$.

To state our result on quantum $(\gl_M, \gl_N)$-duality, it will be convenient to write \eqref{ccd} in the equivalent form
\begin{gather*} \prod_{a=1}^m (\partial_z - \lambda_a)^{\tilde \tau_a} \cdet \big(-z \mathbf 1_{N\times N} + \mc L^{\tilde \D}(\del_z)\big)= \sum_{k=0}^N \tilde S_k(\del_z) (-z)^k.
\end{gather*}
Let us explain the meaning of the expression
\begin{gather*} \cdet \big(-z \mathbf 1_{N\times N} + \mc L^{\tilde \D}(\del_z)\big).\end{gather*} The quantity
\begin{gather*} \cdet \big(\del_\lambda \mathbf 1_{N\times N} + \mc L^{\tilde \D}(\lambda)\big),\end{gather*} which appears in~\eqref{ccd}, belongs to the algebra $U\big(\gl^{\tilde \D}_N\big)(\lambda)[\del_\lambda]$ of differential operators in~$\lambda$ whose coefficients are rational functions of~$\lambda$ with coefficients in~$U\big(\gl^{\tilde \D}_N\big)$. Here $\lambda$ and $\del_\lambda$ can be regarded as formal generators obeying the commutation relation $[\del_\lambda,\lambda]=1$. We can relabel these generators as we wish, provided we preserve this relation. In particular, we may send $(\del_\lambda,\lambda) \mapsto (-z,\del_z)$, since $[-z,\del_z]=1$. Thus $\cdet \big(-z \mathbf 1_{N\times N} + \mc L^{\tilde \D}(\del_z)\big)$ is an element of the algebra
$U\big(\gl^{\tilde \D}_N\big)(\del_z)[z]$.

More precisely, we shall be concerned in what follows with the quantity
\begin{gather} \label{formal Lax glN quant}
\prod_{a=1}^m (\del_z - \lambda_a)^{\tilde\tau_a} \cdet \big(z \mathbf 1_{N\times N} - \mc L^{\tilde \D}(\del_z)\big) = \sum_{k=0}^N (-1)^{N-k} \tilde S_k(\partial_z) z^k.
\end{gather}

\subsection{Bosonic realisation}
We consider realisations of $U\big(\gl^\D_M\big)$ and $U\big(\gl^{\tilde \D}_N\big)$ acting by differential operators on the polynomial algebra $\CC[x^a_i]_{i=1\; a=1}^{N\;\;\; M}$.
Namely, let $\del^a_i := \frac{\del}{\del x^a_i}$
and let us denote by $\mc U_{\mathsf b}$ the unital associative algebra generated by $\{x^a_i\}_{i=1\; a=1}^{N\;\;\; M}$ and $\{\del_i^a\}_{i=1\; a=1}^{N\;\;\; M}$ subject to the commutation relations
\begin{gather*}
[ x^a_i, x^b_j ] = 0 ,\quad
[ \del^a_i, x^b_j ] = \delta_{ij} \delta_{ab},\quad
[ \del^a_i, \del^b_j ] = 0,
\end{gather*}
for $a, b = 1, \ldots, M$ and $i, j = 1, \ldots, N$.

$\mc U_{\mathsf b}$ is in particular a Lie algebra, with the Lie bracket given by the commutator.

\begin{Lemma} \label{lem: pi tilde pi quant b}
The linear maps $\piq_{\mathsf b}\colon \mathfrak{gl}_M^\D \to \U_{\mathsf b}$ and $\piqt_{\mathsf b} \colon \mathfrak{gl}_N^{\tilde\D} \to \U_{\mathsf b}$ defined by
\begin{gather*}
\piq_{\mathsf b}\big(\mathsf E^{(z_i)}_{ab \ul r}\big) = \sum_{u= \nu_i+1}^{\nu_i +\tau_i - r} x^a_{u+r} \del^b_u, \qquad
\piq_{\mathsf b}\big(\mathsf E^{(\infty)}_{ab \ul 1}\big) = - \left( \bigoplus_{c=1}^m J_{\tilde \tau_c}(-\lambda_c) \right)_{ba},
\end{gather*}
for every $r = 0, \ldots, \tau_i -1$, $i = 1, \ldots, n$ and $a, b = 1,\ldots, M$, and
\begin{gather*}
\piqt_{\mathsf b}\big(\tilde{\mathsf E}^{(\lambda_a)}_{ij \ul s} \big) = \sum_{u= \tilde\nu_a+1}^{\tilde\nu_a + \tilde\tau_a - s}
\del^u_j x^{u+s}_i, \qquad
\piqt_{\mathsf b}\big(\tilde{\mathsf E}^{(\infty)}_{ij \ul 1} \big) = - \left( \bigoplus_{k=1}^n J_{\tau_k}(-z_k) \right)_{ji},
\end{gather*}
for every $s = 0, \ldots, \tilde \tau_a -1$, $i,j = 1, \ldots, N$ and $a = 1,\ldots, m$, are homomorphisms of Lie algebras. They extend uniquely to homomorphisms of associative algebras $\piq_{\mathsf b}\colon U\big(\mathfrak{gl}_M^\D\big) \to \U_{\mathsf b}$ and $\piqt_{\mathsf b}\colon U\big(\mathfrak{gl}_N^{\tilde\D}\big) \to \U_{\mathsf b}$.
\end{Lemma}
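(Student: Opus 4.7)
The plan is to verify the Lie bracket relations \eqref{M takiff rels} directly on the chosen generators and then invoke the universal property of $U(\mathfrak g)$ to extend $\piq_{\mathsf b}$ and $\piqt_{\mathsf b}$ to associative algebra homomorphisms. Two kinds of brackets are essentially trivial. First, the generators at infinity map to entries of numerical block Jordan matrices, hence to $\CC\cdot 1 \subset \U_{\mathsf b}$; these are central in $\U_{\mathsf b}$, matching the fact that $\mathsf E^{(\infty)}_{ab\ul 1}$ and $\tilde{\mathsf E}^{(\infty)}_{ij\ul 1}$ are Casimirs of $\gl^\D_M$ and $\gl^{\tilde\D}_N$. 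Second, brackets between generators attached to distinct finite points $z_i\neq z_j$ (resp.\ $\lambda_a\neq\lambda_b$) vanish on both sides, because the index ranges $[\nu_i+1,\nu_i+\tau_i]$ for different $i$ (resp.\ $[\tilde\nu_a+1,\tilde\nu_a+\tilde\tau_a]$ for different $a$) are disjoint, so the relevant Weyl-algebra variables commute.

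The substantive step is the bracket of two generators within the same Takiff block. For $\piq_{\mathsf b}$, I would expand $[\sum_u x^a_{u+r}\del^b_u,\sum_v x^c_{v+s}\del^d_v]$ using $[\del^b_u,x^c_{v+s}]=\delta_{bc}\delta_{u,v+s}$ and $[x^a_{u+r},\del^d_v]=-\delta_{ad}\delta_{u+r,v}$; the only surviving contributions are two mixed terms, and collapsing the double sum via the Kronecker deltas produces exactly $\delta_{bc}\piq_{\mathsf b}(\mathsf E^{(z_i)}_{ad\ul{r+s}})-\delta_{ad}\piq_{\mathsf b}(\mathsf E^{(z_i)}_{cb\ul{r+s}})$. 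A small but essential check is that the remaining summation range reduces to $[\nu_i+1,\nu_i+\tau_i-r-s]$, which matches the definition of $\piq_{\mathsf b}$ on $\mathsf E^{(z_i)}_{\cdot\cdot\ul{r+s}}$ and, in particular, is empty precisely when $r+s\geq\tau_i$, so the image vanishes exactly when the Lie algebra element does.

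For $\piqt_{\mathsf b}$ the calculation is structurally identical, but one must keep track of the \emph{reversed} ordering $\del^u_j x^{u+s}_i$. The corresponding commutators $[\del^u_j,x^{v+t}_k]=\delta_{jk}\delta_{u,v+t}$ and $[x^{u+s}_i,\del^v_l]=-\delta_{il}\delta_{u+s,v}$ combine with this reversed order to reassemble into $\delta_{jk}\piqt_{\mathsf b}(\tilde{\mathsf E}^{(\lambda_a)}_{il\ul{s+t}})-\delta_{il}\piqt_{\mathsf b}(\tilde{\mathsf E}^{(\lambda_a)}_{kj\ul{s+t}})$, as required. The main obstacle, such as it is, lies precisely in this bookkeeping: without the reversed ordering one would land in the opposite Lie algebra, so the placement of $\del^u_j$ to the \emph{left} of $x^{u+s}_i$ is not cosmetic but forced by the duality; the extra normal-ordering constants that appear when one would naively try to bring the $x$'s to the left cancel only with this choice. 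Once the Lie algebra homomorphism property is established on generators, the extensions to $U(\gl_M^\D)\to\U_{\mathsf b}$ and $U(\gl_N^{\tilde\D})\to\U_{\mathsf b}$ are immediate from the universal property of the enveloping algebra.
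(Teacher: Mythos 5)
Your proposal is correct and follows essentially the same route as the paper's proof: a direct verification of the bracket relations \eqref{M takiff rels} on the Takiff generators (with vanishing for $i\neq j$ coming from the disjointness of the index blocks, and the generators at infinity mapping to central constants), followed by the universal property of the enveloping algebra. One side remark is inaccurate, though it does not affect the proof: the reversed ordering $\del^u_j x^{u+s}_i$ is not needed to avoid ``landing in the opposite Lie algebra'', since $\tilde{\mathsf E}^{(\lambda_a)}_{ij\ul s}\mapsto\sum_u x^{u+s}_i\del^u_j$ is also a Lie algebra homomorphism --- the two orderings differ by the constant $\tilde\tau_a\delta_{ij}\delta_{s,0}$, and a shift of the images by a multiple of $\delta_{ij}$ preserves the $\gl_N$ commutation relations. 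The particular ordering is chosen so that the duality of Theorem~\ref{thm: q bispec bos} holds exactly, not to make $\piqt_{\mathsf b}$ a homomorphism.
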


\begin{proof}For each $i,j = 1, \ldots, n$ and $a, b = 1,\ldots, M$ we have
\begin{align*}
\big[ \piq_{\mathsf b}\big(\mathsf E^{(z_i)}_{ab \ul r}\big), \piq_{\mathsf b}\big(\mathsf E^{(z_j)}_{cd \ul s}\big) \big]
& = \sum_{u= \nu_i+1}^{\nu_i +\tau_i - r} \sum_{v= \nu_j+1}^{\nu_j +\tau_j - s} \big[ x^a_{u+r} \del^b_u, x^c_{v+s} \del^d_v \big]\\
& = \sum_{u = \nu_i+1}^{\nu_i+\tau_i - r} \sum_{v = \nu_i+1}^{\nu_i+\tau_i - s} \big( x^a_{u+r} \big[ \del^b_u, x^c_{v+s} \big] \del^d_v + x^c_{v+s} \big[ x^a_{u+r}, \del^d_v \big] \del^b_u \big) \delta_{ij}\\
& = \sum_{u = \nu_i+1}^{\nu_i+\tau_i - r - s} \big( \delta_{bc} x^a_{u+r+s} \del^d_u - \delta_{ad} x^c_{u+r+s} \del^b_u \big) \delta_{ij}\\
& = \big( \delta_{bc} \piq_{\mathsf b}\big(\mathsf E^{(z_i)}_{ad \ul {r+s}}\big) - \delta_{ad} \piq_{\mathsf b}\big(\mathsf E^{(z_i)}_{cb \ul {r+s}}\big) \big) \delta_{ij} = \piq_{\mathsf b} \big( \big[ \mathsf E^{(z_i)}_{ab \ul r}, \mathsf E^{(z_j)}_{cd \ul s} \big] \big).
\end{align*}
In the second equality we have used the fact that if $i \neq j$ then all commutators vanish due to the restriction in the range of values in the sums over~$u$ and~$v$.

Likewise, for all $i,j = 1, \ldots, N$ and $a, b = 1,\ldots, m$ we find
\begin{align*}
\big[ \piqt_{\mathsf b}\big(\tilde{\mathsf E}^{(\lambda_a)}_{ij \ul r}\big), \piqt_{\mathsf b}\big(\tilde{\mathsf E}^{(\lambda_b)}_{kl \ul s}\big) \big]&=\sum_{u= \tilde\nu_a+1}^{\tilde\nu_a+ \tilde\tau_a - r}
\sum_{v= \tilde\nu_b+1}^{\tilde\nu_b+ \tilde\tau_b - s} \big[ \del_j^u x_i^{u+r}, \del_l^v x_k^{v+s} \big]\\
& =
\sum_{u= \tilde\nu_a+1}^{\tilde\nu_a+ \tilde\tau_a - r}
\sum_{v= \tilde\nu_a+1}^{\tilde\nu_a+ \tilde\tau_a - s} \big( \del_j^u \big[ x_i^{u+r}, \del_l^v \big] x_k^{v+s} + \del_l^v \big[ \del_j^u, x_k^{v+s} \big] x_i^{u+r} \big) \delta_{ab}\\
& = \sum_{u= \tilde\nu_a+1}^{\tilde\nu_a+ \tilde\tau_a - r - s} \big( {-} \delta_{il} \del_j^u x_k^{u+r+s} + \delta_{jk} \del_l^u x_i^{u+r+s} \big) \delta_{ab}\\
& = \big( \delta_{jk} \piqt_{\mathsf b}\big( \tilde{\mathsf E}^{(\lambda_a)}_{il \ul{r+s}} \big) - \delta_{il} \piqt_{\mathsf b}\big( \tilde{\mathsf E}^{(\lambda_a)}_{kj \ul{r+s}} \big) \big) \delta_{ab} = \piqt_{\mathsf b} \big( \big[ \tilde{\mathsf E}^{(\lambda_a)}_{ij \ul r}, \tilde{\mathsf E}^{(\lambda_b)}_{kl \ul s} \big] \big),
\end{align*}
as required. Moreover, all the commutators involving the generators at infinity are also easily seen to be preserved by the linear maps $\piq_{\mathsf b}$ and $\piqt_{\mathsf b}$ since $z_i \in \CC$ and $\lambda_a \in \CC$ are central in~$\U_{\mathsf b}$.
\end{proof}

Given any unital associative algebra $\mathcal U$ we denote by $\mathcal U[z,\partial_z]$ the tensor product of unital associative algebras $\mathcal U \otimes \CC[z, \partial_z]$. As in the classical setting of Section~\ref{sec: bos real cla}, consider also the unital associative algebras $\mathcal U(z)[\partial_z] \coloneqq \mathcal U \otimes \CC(z)[\partial_z]$ and $\mathcal U(\partial_z)[z] \coloneqq \mathcal U \otimes \CC(\partial_z)[z]$, both containing $\mathcal U[z, \partial_z]$ as a subalgebra. We extend the homomorphisms $\piq_{\mathsf b}$ and $\piqt_{\mathsf b}$ from Lemma \ref{lem: pi tilde pi quant b} to homomorphisms of tensor product algebras,
\begin{gather*}
\piq_{\mathsf b}\colon \ U\big(\mathfrak{gl}_M^\D\big)(z)[\partial_z] \to \mathcal U_{\mathsf b}(z)[\partial_z], \qquad
\piqt_{\mathsf b}\colon \ U\big(\mathfrak{gl}_M^{\tilde \D}\big)(\partial_z)[z] \to \mathcal U_{\mathsf b}(\partial_z)[z],
\end{gather*}
respectively. Applying these homomorphisms respectively to the expressions given by \eqref{formal Lax glM quant} and~\eqref{formal Lax glN quant}, Theorem~\ref{thm: q bispec bos} below shows that the resulting expressions in fact live in the common subalgebra $\mathcal U_{\mathsf b}[z, \partial_z]$. The coefficients of the resulting differential operators in $z$ span the respective images of the quantum Gaudin algebras in $\U_{\mathsf b}$, namely
\begin{gather*} \piq_{\mathsf b}\big(\mathscr Z\big(\gl^\D_M\big)\big) \subset \U_\mathsf{b}\qquad\text{and}\qquad
\piqt_{\mathsf b}\big(\mathscr Z\big(\gl^{\tilde \D}_N\big)\big) \subset \U_\mathsf{b}.\end{gather*}
The following theorem establishes that these commutative subalgebras of $\U_{\mathsf b}$ coincide.

\begin{Theorem} \label{thm: q bispec bos} We have
\begin{gather*}
\piq_{\mathsf b} \left( \prod_{i=1}^n (z - z_i)^{\tau_i} \cdet\big( \del_z {\bf 1}_{M \times M} - \trp \mc L^\D(z) \big) \right)\\
 \qquad{} = \piqt_{\mathsf b} \left( \prod_{a=1}^m (\del_z - \lambda_a)^{\tilde\tau_a} \cdet\big( z {\bf 1}_{N \times N} - \mc L^{\tilde \D}(\partial_z) \big) \right),
\end{gather*}
as an equality of polynomial differential operators in~$z$.
\end{Theorem}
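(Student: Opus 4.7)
The plan is to adapt the proof of Theorem~\ref{thm: cla bispec bos} to the quantum setting, replacing ordinary determinants by column determinants and using the Schur-complement identity for Manin matrices (Proposition~\ref{prop: schur complements}). I would introduce the $(M+N) \times (M+N)$ block matrix
\begin{gather*}
\mathcal M \coloneqq \begin{pmatrix} \Lambda & X \\ \trp P & Z \end{pmatrix},
\end{gather*}
where $\Lambda \coloneqq \bigoplus_{a=1}^m \trp J_{\tilde\tau_a}(\partial_z - \lambda_a)$ and $Z \coloneqq \bigoplus_{i=1}^n J_{\tau_i}(z - z_i)$ are now $M \times M$ and $N \times N$ matrices with entries in $\CC[\partial_z]$ and $\CC[z]$ respectively, and $X \coloneqq (x^a_i)$, $P \coloneqq (\partial^a_i)$, so all entries of $\mathcal M$ lie in $\mathcal U_{\mathsf b}[z, \partial_z]$.

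The main new step is to verify that $\mathcal M$ is a Manin matrix in the sense of Definition~\ref{def: Manin}. Column commutativity is immediate from the construction: the first $M$ columns contain only polynomials in $\partial_z$ together with derivatives $\partial^a_i$ (all mutually commuting), while the last $N$ columns contain only polynomials in $z$ together with coordinates $x^a_i$. For the cross-commutator identity $[\mathcal M_{ij}, \mathcal M_{kl}] = [\mathcal M_{kj}, \mathcal M_{il}]$, a short case analysis shows that the only potentially nonzero contributions come either from $[\partial_z, z]=1$ --- which occurs precisely when $\mathcal M_{ij}$ and $\mathcal M_{kl}$ lie on the diagonals of $\Lambda$ and $Z$ respectively --- or from $[\partial^a_i, x^b_j] = \delta_{ab}\delta_{ij}$ in the $\trp P$-versus-$X$ block; in both cases the index constraints force the two sides of the Manin identity to agree.

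Once $\mathcal M$ is known to be Manin, I apply Proposition~\ref{prop: schur complements} in both of its forms. The blocks $\Lambda$ and $Z$ are invertible only over suitable extensions of $\mathcal U_{\mathsf b}[z, \partial_z]$ (localising at $z - z_i$ for $Z^{-1}$, and passing to a formal Laurent expansion in $(\partial_z - \lambda_a)^{-1}$ for $\Lambda^{-1}$), but the proposition guarantees that the resulting identity between column determinants lives in the base algebra. Part~$(i)$ with $A = \Lambda$ gives $\cdet \mathcal M = \cdet \Lambda \cdot \cdet\bigl( Z - \trp P\, \Lambda^{-1} X \bigr)$, and part~$(ii)$ with $D = Z$ gives $\cdet \mathcal M = \cdet Z \cdot \cdet\bigl(\Lambda - X Z^{-1} \trp P\bigr)$. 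Since the entries of $\Lambda$ and of $Z$ each commute amongst themselves, one has $\cdet\Lambda = \det\Lambda = \prod_a (\partial_z - \lambda_a)^{\tilde\tau_a}$ and $\cdet Z = \det Z = \prod_i (z - z_i)^{\tau_i}$. Equating the two expressions for $\cdet \mathcal M$ yields the master identity
\begin{gather*}
\prod_{i=1}^n (z - z_i)^{\tau_i}\, \cdet\bigl( \Lambda - X Z^{-1} \trp P \bigr) = \prod_{a=1}^m (\partial_z - \lambda_a)^{\tilde\tau_a}\, \cdet\bigl(Z - \trp P\, \Lambda^{-1} X\bigr).
\end{gather*}

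It then remains to identify the two Schur complements with the respective Lax matrices, exactly as in the classical proof but with $\partial_z$ in place of $\lambda$. Using Lemma~\ref{lem: pi tilde pi quant b} and the inverse formula~\eqref{J block inv}, one finds $\Lambda - X Z^{-1} \trp P = \partial_z \mathbf{1}_{M\times M} - \piq_{\mathsf b}\bigl(\trp \mc L^\D(z)\bigr)$ and $Z - \trp P\, \Lambda^{-1} X = z \mathbf{1}_{N\times N} - \piqt_{\mathsf b}\bigl(\mc L^{\tilde\D}(\partial_z)\bigr)$. Two features make these quantum identifications consistent: first, $\partial_z$ commutes with every generator of $\mathcal U_{\mathsf b}$, so the rational entries of $\Lambda^{-1}$ and $Z^{-1}$ can be moved freely past the coordinates and derivatives; second, the natural orderings in the block products --- $x^a_i$ to the left of $\partial^b_j$ in $(X Z^{-1} \trp P)_{ab}$, and $\partial^a_i$ to the left of $x^b_j$ in $(\trp P\, \Lambda^{-1} X)_{ij}$ --- match precisely the normal orderings in the definitions of $\piq_{\mathsf b}$ and $\piqt_{\mathsf b}$. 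I expect the Manin-matrix verification to be the only genuinely new input; once that is settled, Proposition~\ref{prop: schur complements} does the heavy lifting and the remainder is a quantum rewriting of the classical computation.
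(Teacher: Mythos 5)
Your proposal is correct and follows essentially the same route as the paper's own proof: the same block Manin matrix $\begin{pmatrix} \Lambda & X \\ \trp D & Z\end{pmatrix}$ over $\mathcal U_{\mathsf b}[z,\partial_z]$, the same verification of the Manin property via the commutators $[\partial_z,z]=1$ and $[\partial^a_i,x^b_j]=\delta_{ab}\delta_{ij}$, the two applications of Proposition~\ref{prop: schur complements} over the enlarged algebras $\mathcal U_{\mathsf b}(z)[\partial_z]$ and $\mathcal U_{\mathsf b}(\partial_z)[z]$, and the identification of the Schur complements with $\partial_z\mathbf 1 - \piq_{\mathsf b}\big(\trp\mc L^\D(z)\big)$ and $z\mathbf 1 - \piqt_{\mathsf b}\big(\mc L^{\tilde\D}(\partial_z)\big)$. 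Your observation that the operator orderings in the Schur complements match the normal orderings built into $\piq_{\mathsf b}$ and $\piqt_{\mathsf b}$ is exactly the point the paper relies on implicitly.
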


\begin{proof}
Introduce the $M \times M$ and $N \times N$ block diagonal matrices
\begin{gather*}\Lambda \coloneqq \bigoplus_{a = 1}^m \trp J_{\tilde\tau_a}(\del_z - \lambda_a),\qquad
Z \coloneqq \bigoplus_{i = 1}^n J_{\tau_i}(z - z_i).
\end{gather*}
Also introduce the $M \times N$ matrices
\begin{gather*}
D \coloneqq (\del^a_i)_{a = 1\;i=1}^{M \;\;\;\, N},
\qquad
X \coloneqq (x^a_i)_{a = 1\;i=1}^{M \;\;\;\, N}.
\end{gather*}
Consider the block matrix
\begin{gather*}
M \coloneqq \left(
\begin{matrix}
 \Lambda & X\\
\trp D & Z
\end{matrix}
\right),
\end{gather*}
with entries in the noncommutative algebra $\mc A \coloneqq \mc U_{\mathsf b} [z, \partial_z]$.
The key observation is that this is a Manin matrix.
Indeed, the only non-trivial check is for the $2\times 2$ submatrices of the form
\begin{gather*} \bmx \del_z - \lambda_a & x_i^a \\ \del^a_i & z- z_i \emx \end{gather*}
and for these we have $[\del_z - \lambda_a, z-z_i] = 1 = [\del_i^a,x^a_i]$ as required. This fact means that we can follow the proof of Theorem \ref{thm: cla bispec bos}, with suitable modifications, as follows.

The square matrices $Z$ and $\Lambda$ with entries in $\CC[z, \partial_z] \subset \mathcal U_{\mathsf b}[z, \partial_z]$ have (two-sided) inverses in the enlarged algebras $\mc A'' \coloneqq \mc U_{\mathsf b} (z)[\partial_z]$ and $\mc A' \coloneqq \mc U_{\mathsf b}(\partial_z)[z]$, respectively, both of which contain~$\mc A$ as a subalgebra. These inverses are given explicitly by
\begin{gather*}
Z^{-1} = \bigoplus_{i = 1}^n J_{\tau_i}(z - z_i)^{-1}, \qquad
\Lambda^{-1} = \bigoplus_{a = 1}^m \trp J_{\tilde\tau_a}(\del_z - \lambda_a)^{-1}.
\end{gather*}
We are therefore in the setup of Proposition \ref{prop: schur complements}. We may apply it to evaluate $\cdet M$ in two different ways. We obtain
\begin{gather} \label{spec dual quant b}
\cdet \Lambda \cdet\big( Z - \trp D \Lambda^{-1} X\big) = \cdet Z \cdet\big( \Lambda - X Z^{-1} \trp D\big),
\end{gather}
as an equality in $\mc A = \mc U_{\mathsf b}[z, \partial_z]$, namely this is an equality of polynomial differential operators in~$z$ with coefficients in~$\mc U_{\mathsf b}$.

It remains to evaluate both sides of \eqref{spec dual quant b} more explicitly. We have
\begin{gather*}
\cdet Z = \prod_{i=1}^n (z - z_i)^{\tau_i}, \qquad \cdet \Lambda = \prod_{a=1}^m (\partial_z - \lambda_a)^{\tilde\tau_a},
\end{gather*}
where the order of the products on the right of these equalities does not matter. Now $Z$ and $\Lambda$ can be written explicitly as follows
\begin{gather*}
Z = \sum_{i,j=1}^N \tilde E_{ij} \big( z \delta_{ij} - \piqt_{\mathsf b}\big(\tilde{\mathsf E}^{(\infty)}_{ji \ul 1}\big) \big), \qquad
\Lambda = \sum_{a,b=1}^M E_{ab} \big( \partial_z \delta_{ab} - \piq_{\mathsf b}\big(\mathsf E^{(\infty)}_{ab \ul 1}\big) \big)
\end{gather*}
with $\piq_{\mathsf b}$ and $\piqt_{\mathsf b}$ as defined in Lemma~\ref{lem: pi tilde pi quant b}. In terms of these expressions we can write
\begin{align*}
\Lambda - X Z^{-1} \trp D &= \sum_{a,b = 1}^M E_{ab} \big(\Lambda - X Z^{-1} \trp D\big)_{ab}\\
& = \partial_z {\bf 1} - \sum_{a, b= 1}^M E_{ab} \left( \piq_{\mathsf b}\big(\mathsf E^{(\infty)}_{ab \ul 1}\big) + \sum_{i=1}^n
\sum_{j, k = \nu_i+1}^{\nu_i+ \tau_i} x^a_j \big( J_{\tau_i}(z - z_i)^{-1} \big)_{jk} \partial^b_k \right).
\end{align*}
The latter expression is exactly $\partial_z {\bf 1} - \piq_{\mathsf b}\big( \trp \mc L^\D(z) \big)$ by virtue of Lemma~\ref{lem: pi tilde pi quant b}, the expression~\eqref{formal Lax glM} for the Lax matrix $\mc L^\D(z)$ and the expression~\eqref{J block inv} for the inverse of a Jordan block. Likewise
\begin{align*}
Z - \trp D \Lambda^{-1} X &= \sum_{i, j = 1}^N \tilde E_{ij} \big(Z - \trp D \Lambda^{-1} X\big)_{ij}\\
& = z {\bf 1} - \sum_{i,j = 1}^N \tilde E_{ij} \left( \piqt_{\mathsf b}\big( \tilde {\mathsf E}^{(\infty)}_{ji \ul 1} \big) + \sum_{a = 1}^m
\sum_{b,c = \tilde\nu_a+1}^{\tilde\nu_a+ \tilde\tau_a}
\partial^b_i \big( J_{\tilde\tau_a}(\partial_z - \lambda_a)^{-1} \big)_{cb} x^c_j \right),
\end{align*}
which coincides with $z {\bf 1} - \piqt_{\mathsf b} \big( \mc L^{\tilde \D}(\partial_z) \big)$. The result now follows.
\end{proof}

In the special case of no Jordan blocks and no non-trivial Takiff algebras, Theorem~\ref{thm: q bispec bos} can be found in~\cite{MTVcapelli}. See also \cite[Proposition~8]{CF}, where it is noted that the relation $\cdet M= \det Z \cdet\big( \Lambda - X Z^{-1} \trp D \big)$ leads to a relation between the classical spectral curve and the ``quantum spectral curve''.

\section[$\ZZ_2$-cyclotomic Gaudin models with irregular singularities]{$\boldsymbol{\ZZ_2}$-cyclotomic Gaudin models with irregular singularities} \label{sec: cyclo bispec}

Another possible class of generalisations of Gaudin models are those whose Lax matrix is equivariant under an action of the cyclic group, determined by a choice of automorphism of the Lie algebra (here~$\gl_M$).
Such models were considered in \cite{Skr3,Skr2,Skr1} and in~\cite{CY} for automorphisms of order~2, and for automorphisms of arbitrary finite order in~\cite{VY1,VY3}.

It is natural to ask whether $(\gl_M, \gl_N)$-dualities also exist, in the sense of Section~\ref{sec: c}, between cyclotomic Gaudin models. Theorem~\ref{thm: cla bispec bos cycl}, which can be deduced from the results of~\cite{Adams:1990mj}, establishes a~duality between a cyclotomic $\gl_M$-Gaudin model associated with the diagram automorphism of~$\gl_M$ and a non-cyclotomic $\sp_N$-Gaudin model.

\subsection[$\ZZ_2$-cyclotomic Lax matrix for the diagram automorphism]{$\boldsymbol{\ZZ_2}$-cyclotomic Lax matrix for the diagram automorphism} \label{sec: Z2 cyclo}

Let $z_i \in \CC$ for $i =1, \ldots, n$ be such that $0\neq z_i \neq \pm z_j$ for $i \neq j$. Pick and fix integers $\tau_i \in \ZZ_{\geq 1}$ for $i=0$ and for each $i = 1, \ldots, n$. Consider the effective divisor
\begin{gather*}
\C = 2 \tau_0\cdot 0 + \sum_{i=1}^n \tau_i \cdot z_i + \sum_{i=1}^n \tau_i \cdot (-z_i) + 2 \cdot \infty.
\end{gather*}
Note, in particular, that the Takiff degree at the origin is always even. Let $N\in \ZZ_{\geq 1}$. We require that $\deg \C = 2N + 2$ or in other words,
\begin{gather*}
\tau_0 + \sum_{i=1}^n \tau_i = N.
\end{gather*}

Let $M\in \ZZ_{\geq 1}$. As before, cf.\ Section~\ref{sec: glND}, denote by $\mathsf E_{ab}$ for $a, b = 1, \ldots, M$ the standard basis of $\mathfrak{gl}_M$. There is an automorphism $\sigma$ of $\gl_M$ defined by
\begin{gather*} \sigma(\mathsf E_{ab}) := - \mathsf E_{ba}.\end{gather*}
We call this the \emph{diagram automorphism} of $\gl_M$. The Lie algebra $\gl_M$ decomposes into the direct sum of the $\pm 1$ eigenspaces of~$\sigma$,
\begin{gather*} \gl_M = \so_M \oplus \mf p_M.\end{gather*}
Here the subalgebra of invariants, i.e., the $(+1)$-eigenspace, is a copy of the Lie algebra $\so_M$. The $(-1)$-eigenspace $\p_M$ is a copy of the symmetric second rank tensor representation of $\so_M$. We shall write
\begin{gather*}
\mathsf E_{ab}^\pm \coloneqq \mathsf E_{ab} \pm \mathsf E_{ba},
\end{gather*}
so that $\mathsf E^+_{ab} \in \so_M$ and $\mathsf E^-_{ab} \in \p_M$, for all $a, b = 1, \ldots, M$. We introduce the pair of maps $\Pi_{(0)} \colon \gl_M \to \so_M$, $\mathsf E_{ab} \mapsto \mathsf E^-_{ab}$ and $\Pi_{(1)} \colon \gl_M \to \p_M$, $\mathsf E_{ab} \mapsto \mathsf E^+_{ab}$. More generally, for $r \in \ZZ_{\geq 0}$ we define $\Pi_{(r)} \coloneqq \Pi_{(r \, \text{mod}\, 2)}\colon \gl_M \to \gl_M$, so that $\Pi_{(r)} \mathsf E_{ab} = \mathsf E_{ab} - (-1)^r \mathsf E_{ba}$.

There is an extension of the automorphism $\sigma$ to an automorphism of the polynomial algebra $\gl_M[\varepsilon]$ defined by
\begin{gather*} \mathsf X \varepsilon^k \mapsto \sigma(\mathsf X) (-\varepsilon)^k.\end{gather*}
Let $\gl_M[\varepsilon]^\sigma$ denote the subalgebra of invariants. As vector spaces, we have
\begin{gather*} \gl_M[\varepsilon]^\sigma \cong \so_M \big[\varepsilon^2\big] \oplus \varepsilon \p_M\big[\varepsilon^2\big].
\end{gather*}
Define $\mf{gl}_M^{\C}$ to be the direct sum of Takiff Lie algebras
\begin{align*}
\mathfrak{gl}_M^\C &\coloneqq (\varepsilon_\infty \gl_M[\varepsilon_\infty])^\sigma / \varepsilon_\infty^2 \oplus \bigoplus_{i=1}^n \mathfrak{gl}_M[\varepsilon_{z_i}] / \varepsilon_{z_i}^{\tau_i} \oplus \gl_M[\varepsilon_0]^\sigma / \varepsilon_0^{2 \tau_0} .
\end{align*}
Note that as a vector space the Takiff algebra attached to the point at infinity is simply $(\varepsilon_\infty \gl_M[\varepsilon_\infty])^\sigma / \varepsilon_\infty^2 \cong \p_M \varepsilon_\infty$.

As before we let $\rho \colon \mathfrak{gl}_M \to \operatorname{Mat}_{M \times M}(\CC)$ denote the defining representation of $\mathfrak{gl}_M$ and write $E_{ab} \coloneqq \rho(\mathsf E_{ab})$. The formal Lax matrix of the $\ZZ_2$-cyclotomic Gaudin model associated with $\mathfrak{gl}^{\mathcal C}_M$ is the $M\times M$ matrix with entries consisting of $\gl_M^\C$-valued rational functions of $z$, given by
\begin{align}
\tilde{\mc L}^\C(z) dz
&\coloneqq \sum_{a, b=1}^M E_{ba} \otimes \left( \mathsf E^{+(\infty)}_{ab \ul 1} + \sum_{r=0}^{2 \tau_0 - 1} \frac{(\Pi_{(r)} \mathsf E_{ab})^{(0)}_{\ul r}}{z^{r+1}} \right.\notag\\
&\left.\qquad\qquad\qquad {}+ \sum_{i=1}^n \sum_{r=0}^{\tau_i - 1} \frac{\mathsf E^{(z_i)}_{ab \ul r}}{(z - z_i)^{r+1}} + \sum_{i=1}^n \sum_{r=0}^{\tau_i - 1} \frac{(-1)^{r+1} \mathsf E^{(z_i)}_{ba \ul r}}{(z + z_i)^{r+1}} \right) dz.\label{formal Lax glM cyclotomic}
\end{align}
It obeys the following Lax algebra
\begin{gather} \label{PB Lax cyclo}
\big[ \tilde{\mc L}^\C_\1(z), \tilde{\mc L}^\C_\2(w) \big] = \big[ r_{\1\2}(z,w), \tilde{\mc L}^\C_\1(z) \big]
 - \big[ r_{\2\1}(w,z), \tilde{\mc L}^\C_\2(w) \big]
\end{gather}
where $r_{\1\2}(z,w)$ denotes the (non-skew-symmetric) classical $r$-matrix
\begin{gather*}
r_{\1\2}(z,w) \coloneqq \sum_{a,b=1}^M \left( \frac{E_{ba} \otimes E_{ab}}{w-z} - \frac{E_{ba} \otimes E_{ba}}{w+z} \right).
\end{gather*}

Consider the quantity
\begin{gather*} 
\left(z^{2 \tau_0} \prod_{i=1}^n (z - z_i)^{\tau_i}(z + z_i)^{\tau_i}\right) \det\big(\lambda \mathbf 1_{M\times M} - \tilde{\mathcal L}^\C(z)\big)
\end{gather*}
This is a polyomial in $\lambda$ of order $M$. For each $0\leq k\leq M$, the coefficient of $\lambda^k$ is a rational function in $z$ valued in $S\big(\gl^\C_M\big)$. The \emph{classical cyclotomic Gaudin algebra $\mathscr Z\big(\gl^\C_M\big)$} associated with the divisor $\C$ and the diagram automorphism $\sigma$ is by definition the Poisson subalgebra of $S\big(\gl^\C_M\big)$ generated by the coefficients of these rational functions. It follows from~\eqref{PB Lax cyclo} that~$\mathscr Z\big(\gl^\C_M\big)$ is a~Poisson-commutative subalgebra of~$S\big(\gl^\C_M\big)$.

\subsection[Lax matrix of $\sp_{2N}$-Gaudin model with regular singularities]{Lax matrix of $\boldsymbol{\sp_{2N}}$-Gaudin model with regular singularities} \label{sec: so2N}

Denote by $\tilde{\mathsf E}_{IJ}$ the standard basis of $\gl_{2N}$, where, for convenience, we shall let $I$, $J$ run over the index set $\mathcal I \coloneqq \{-N,\dots,-1,1, \dots, N\}$. There is a subalgebra of $\gl_{2N}$, isomorphic to the Lie algebra $\sp_{2N}$, spanned by
\begin{gather*} 
\bar{\mathsf E}_{IJ} \coloneqq \tilde{\mathsf E}_{IJ} - \sigma_I \sigma_J \tilde{\mathsf E}_{-J,-I},
\end{gather*}
for all $I, J \in \mathcal I$. Here we denote by $\sigma_I$ the sign of $I$, equal to~$1$ if $I > 0$ and to $-1$ if $I < 0$. We have the relation $\bar{\mathsf E}_{-J, -I} = - \sigma_I \sigma_J \bar{\mathsf E}_{IJ}$ for every $I, J \in \mathcal I$. Let
\begin{gather*}
\mathcal I_2 \coloneqq \big\{ (I, J) \in \mathcal I \times \mathcal I \,\big|\, I, J >0 \; \text{or}\; \sigma_I \sigma_J = -1 \; \text{with}\; |I| \leq |J| \big\}.
\end{gather*}
Then $\big\{ \bar{\mathsf E}_{IJ} \big\}_{(I, J) \in \mathcal I_2}$ is a basis of the subalgebra~$\sp_{2N}$. A~dual basis with respect to half the trace in the fundamental representation is given by $\big\{ \bar{\mathsf E}^{IJ} \big\}_{(I, J) \in \mathcal I_2}$ where
\begin{gather*} 
\bar{\mathsf E}^{IJ} \coloneqq \tilde{\mathsf E}_{JI} - \sigma_I \sigma_J \tilde{\mathsf E}_{-I,-J}, \qquad
\bar{\mathsf E}^{I, -I} \coloneqq \tilde{\mathsf E}_{-I, I},
\end{gather*}
for any $I, J \in \mathcal I$ with $J \neq - I$. Indeed, if we let $\bar E_{IJ} \coloneqq \rho\big(\bar{\mathsf E}_{IJ}\big)$ and $\bar E^{IJ} \coloneqq \rho\big(\bar{\mathsf E}^{IJ}\big)$ for all $I, J \in \mathcal I$ then we have $\ha \tr\big(\bar E_{IJ} \bar E^{KL}\big) = \delta_{IL} \delta_{JK}$ for all $(I, J), (K, L) \in \mathcal I_2$.

Let $\bar\D$ denote the special case of the effective divisor $\tilde\D$ of Section~\ref{sec: glND} obtained by setting $\tilde\tau_a=1$ for each $a = 1,\ldots, m$, and hence $m=M$. That is,
\begin{gather} \label{bar D}
\bar\D = \sum_{a=1}^M \lambda_a + 2\cdot \infty.
\end{gather}
Introduce the direct sum of Lie algebras
\begin{gather*}
\sp_{2N}^{\bar\D} \coloneqq \tilde\varepsilon_\infty \sp_{2N}[\tilde\varepsilon_\infty] / \tilde\varepsilon_\infty^2 \oplus \bigoplus_{a=1}^M \sp_{2N}.
\end{gather*}

The Lax matrix of the classical Gaudin model associated with the divisor $\bar\D$ is the $2N\times 2N$ matrix of $\sp_{2N}^{\bar\D}$-valued rational functions of $\lambda$ given by
\begin{align}\label{formal Lax so2N}
\mc L^{\bar\D}(\lambda) d\lambda &\coloneqq \sum_{(I,J) \in \mathcal I_2} \bar E^{IJ} \otimes
\left( \bar{\mathsf E}^{(\infty)}_{IJ} + \sum_{a=1}^M \frac{\bar{\mathsf E}^{(\lambda_a)}_{IJ}}{\lambda - \lambda_a} \right) d\lambda,
\end{align}
where by abuse of notation we drop the subscript on the Takiff generators, namely we define $\bar{\mathsf E}^{(\lambda_a)}_{IJ} \coloneqq \bar{\mathsf E}^{(\lambda_a)}_{IJ \ul 0}$ for all $a = 1, \ldots, M$ and $\bar{\mathsf E}^{(\infty)}_{IJ} \coloneqq \bar{\mathsf E}^{(\infty)}_{IJ \ul 1}$. It obeys the Lax algebra
\begin{gather} \label{PB Lax sp2N}
\big[ \mc L^{\bar\D}_\1(\lambda), \mc L^{\bar\D}_\2(\mu) \big] = \big[ \bar r_{\1\2}(\lambda, \mu), \mc L^{\bar\D}_\1(\lambda) + \mc L^{\bar\D}_\2(\mu) \big]
\end{gather}
where $\bar r_{\1\2}(\lambda, \mu)$ is the standard skew-symmetric classical $r$-matrix with spectral parameter for the Lie algebra $\sp_{2N}$, namely
\begin{gather*}
\bar r_{\1\2}(\lambda, \mu) \coloneqq \sum_{(I,J) \in \mathcal I_2} \frac{\bar E^{IJ} \otimes \bar E_{IJ}}{\mu - \lambda}.
\end{gather*}
Just as in Section~\ref{sec: spectral curves} we may consider the subalgebra $\mathscr Z\big(\sp_{2N}^{\bar \D}\big)$ of the Poisson algebra $S\big(\sp_{2N}^{\bar \D}\big)$ generated by the coefficients rational functions in $\lambda$ obtained as the coefficients of the polynomial in~$z$ defined by
\begin{gather*} 
\prod_{a=1}^M (\lambda - \lambda_a) \det \big(z \mathbf 1_{N\times N} - \mc L^{\bar \D}(\lambda)\big),
\end{gather*}
which is Poisson-commutative by virtue of the relation~\eqref{PB Lax sp2N}.

\subsection{Bosonic realisation}
Consider the Poisson algebra $\P_{\mathsf b} \coloneqq \CC[x^a_i, p^b_j]_{i,j=1\; a,b=1}^{N\quad\;\, M}$, as in Section~\ref{sec: bos real cla}, with Poisson brackets given by~\eqref{PB for Pb}.

We now want to break up the list of integers from $1$ to $N$ into $n+1$ blocks of size $\tau_i$ for each $i = 0,1, \ldots, n$.
Define the integers $\nu_i$ by -- in contrast to \eqref{nudef} --
\begin{gather*}
\nu_i \coloneqq \sum_{j=0}^{i-1} \tau_j,
\end{gather*}
for each $i = 0, \ldots, N$ (note in particular that now $\nu_0 = 0$), so that
\begin{gather*}
(1, \ldots, N) =
(1, \dots, \tau_0;\nu_1+1, \dots, \nu_1+\tau_1; \dots; \nu_{n}+1, \dots, \nu_{n}+\tau_{n}).
\end{gather*}

\newcommand{\picb}{\bar\pi}
\begin{Lemma} \label{lem: cyc pic pict}
Let $\mu \in \CC$ be arbitrary and define a pair of linear maps $\pic_{\mathsf b}\colon \mathfrak{gl}_M^\C \to \P_{\mathsf b}$ and $\picb_{\mathsf b} \colon \sp_{2N}^{\bar\D} \to \P_{\mathsf b}$ by
\begin{gather*}
\pic_{\mathsf b}\big(\mathsf E^{(z_i)}_{ab \ul r}\big) = \sum_{u= \nu_i+1}^{\nu_i +\tau_i - r}
x^a_{u+r} p^b_u, \qquad \pic_{\mathsf b}\big(\mathsf E^{+ (\infty)}_{ab \ul 1}\big) = \lambda_a \delta_{ab},\\
\pic_{\mathsf b}\big((\Pi_{(s)} \mathsf E_{ab})^{(0)}_{\ul s}\big) = \sum_{u=1}^{ \tau_0 - s}
 \big( x^a_{u+s} p^b_u - (-1)^s x^b_{u+s} p^a_u \big) - \mu \sum_{\substack{u,v=1 \\u+v=s+1 }}^{\tau_0} (-1)^v x^a_u x^b_v
\end{gather*}
for every $r = 0, \ldots, \tau_i - 1$, $s = 0, \ldots, 2 \tau_0 - 1$, $i = 1, \ldots, n$ and $a, b = 1,\ldots, M$, and
\begin{gather*}
\picb_{\mathsf b}\big(\bar{\mathsf E}^{(\lambda_a)}_{ij} \big) = p^a_j x^a_i \qquad
\picb_{\mathsf b}\big(\bar{\mathsf E}^{(\lambda_a)}_{i,-j} \big) = - x^a_j x^{a}_i, \qquad
\picb_{\mathsf b}\big(\bar{\mathsf E}^{(\lambda_a)}_{-i,j} \big) = p^a_j p^{a}_i, \\
\picb_{\mathsf b}\big(\bar{\mathsf E}^{(\infty)}_{IJ} \big) = - \left( \bigoplus_{i=n}^1 \big( -J_{\tau_i}(- z_i) \big) \oplus \big( - J_{\tau_0}(0) \big) \oplus J_{\tau_0}(0) \oplus \bigoplus_{i=1}^n J_{\tau_i}(-z_i) + \mu \tilde E_{1,-1} \right)_{JI},
\end{gather*}
for every $i,j = 1, \ldots, N$, $I, J \in \mathcal I$ and $a = 1,\ldots, m$. These maps are homomorphisms of Lie algebras. They extend uniquely to homomorphisms of Poisson algebras $\pic_{\mathsf b} \colon S\big(\mathfrak{gl}_M^\C\big) \to \P_{\mathsf b}$ and $\picb_{\mathsf b} \colon S\big(\sp_{2N}^{\bar\D}\big) \to \P_{\mathsf b}$.
\end{Lemma}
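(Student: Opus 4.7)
The plan is to verify that each of the two maps preserves the Lie bracket on basis elements; once that is done, their unique extensions to homomorphisms of symmetric algebras (viewed as Poisson algebras) follow from the universal property together with the Leibniz rule. Both $\gl_M^\C$ and $\sp_{2N}^{\bar\D}$ are direct sums of Takiff Lie algebras indexed by points in $\CC\cup\{\infty\}$, and both linear maps respect this decomposition. It therefore suffices to verify the bracket identity one block at a time: cross-block brackets vanish in the source because distinct Takiff summands commute, and they vanish in the target because the bosonic index ranges $\{\nu_i+1,\ldots,\nu_i+\tau_i\}$ used by different finite blocks are pairwise disjoint, while the images of the blocks at infinity land in the centre $\CC\subset\P_{\mathsf b}$.

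For $\pic_{\mathsf b}$, the Takiff summands at $z_i$ for $i\geq 1$ can be treated word-for-word as in Lemma~\ref{lem: pic pict}: the generators and their images have exactly the same form, and the verification reduces to a re-indexing of $x$ and $p$ after applying $\{p,x\}=\delta$. The summand at infinity is a Casimir of $\gl_M^\C$ and its image $\lambda_a\delta_{ab}$ lies in $\CC$, so both sides of the bracket identity vanish identically. The substantive check will be inside the summand $\gl_M[\varepsilon_0]^\sigma/\varepsilon_0^{2\tau_0}$ at the origin: for generators $F_{ab\ul s}\coloneqq(\Pi_{(s)}\mathsf E_{ab})^{(0)}_{\ul s}$, I would compute $[F_{ab\ul r},F_{cd\ul s}]$ using the $\sigma$-symmetrised Takiff structure, and then $\{\pic_{\mathsf b}(F_{ab\ul r}),\pic_{\mathsf b}(F_{cd\ul s})\}$ using \eqref{PB for Pb}. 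The purely $xp$--$xp$ contributions on each side will match by the same non-cyclotomic reindexing as before; the novel pieces arise from Poisson brackets between the bilinear $xp$-image of one generator and the quadratic $\mu$-term of the other. These contributions are linear in $x$ and must reassemble, after summing over indices and accounting for the $(-1)^v$ signs, into the correct image of the bracket inside the truncated quotient.

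For $\picb_{\mathsf b}$, I would first check that the three-case formula for $\picb_{\mathsf b}(\bar{\mathsf E}^{(\lambda_a)}_{IJ})$ is consistent with the dual-basis relation $\bar{\mathsf E}^{(\lambda_a)}_{-J,-I}=-\sigma_I\sigma_J\bar{\mathsf E}^{(\lambda_a)}_{IJ}$, which is a short case inspection. Inside each summand $\sp_{2N}$ at $\lambda_a$, verifying the $\sp_{2N}$ commutation relations reduces to direct Poisson-bracket computations in $\P_{\mathsf b}$ on the three families $p^a_j x^a_i$, $-x^a_j x^a_i$ and $p^a_j p^a_i$ using only~\eqref{PB for Pb}. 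The image of the Takiff summand at infinity consists of constants in $\CC\subset\P_{\mathsf b}$, matching the Casimir property of this summand; one also checks that the particular ``reflected'' block-diagonal matrix, together with the $\mu\tilde E_{1,-1}$ perturbation, is arranged precisely so that the constraint $\bar{\mathsf E}^{(\infty)}_{-J,-I}=-\sigma_I\sigma_J\bar{\mathsf E}^{(\infty)}_{IJ}$ is automatically satisfied.

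The hard part will be the cyclotomic check at the origin for $\pic_{\mathsf b}$: producing exactly the right cancellations between the $xp$--$xp$, $xp$--$xx$ and $xx$--$xx$ Poisson brackets to reproduce $\pic_{\mathsf b}([F_{ab\ul r},F_{cd\ul s}])$ inside the truncation $\varepsilon_0^{2\tau_0}$, and thereby seeing that the specific form of the $\mu$-dependent quadratic correction is exactly what is needed for the representation to descend to the Takiff quotient at the origin. Everything else is a routine variant of calculations already performed in the proofs of Lemmas~\ref{lem: pi tilde pi ferm} and~\ref{lem: pi tilde pi quant b}.
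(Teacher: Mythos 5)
Your plan follows the paper's proof essentially step for step: block-by-block verification, with the $z_i$-summands reduced to the computation of Lemma~\ref{lem: pi tilde pi quant b}, the summands at infinity handled by the Casimir-to-constant observation, and the real work concentrated in the origin block of $\pic_{\mathsf b}$ and the $\lambda_a$-blocks of $\picb_{\mathsf b}$. Two remarks on the part you leave as ``the hard part''. First, there is no $xx$--$xx$ contribution to chase: the $\mu$-quadratic terms Poisson-commute with each other since $\{x,x\}=0$, so the only new cross terms are of $xp$--$xx$ type; the paper isolates the bilinear piece as $y^{ab}_r\coloneqq\sum_{u=1}^{\tau_0-r}\big(x^a_{u+r}p^b_u-(-1)^r x^b_{u+r}p^a_u\big)$, checks that $\{y^{ab}_r,y^{cd}_s\}$ closes on the $y$'s with the expected four-term structure constants, and then verifies that $\{y^{ab}_r,\,x^c_vx^d_w\}$ and $\{x^a_vx^b_w,\,y^{cd}_s\}$ produce complementary ranges of the summation index so that together they reassemble into $-\mu$ times the quadratic part of $\pic_{\mathsf b}\big((\Pi_{(r+s)}[\cdot,\cdot])^{(0)}_{\ul{r+s}}\big)$; the truncation at $\varepsilon_0^{2\tau_0}$ is respected because all sums are empty once $r+s\geq 2\tau_0$. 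Second, for $\picb_{\mathsf b}$ the paper avoids your three-case analysis by setting $q^a_i\coloneqq x^a_i$, $q^a_{-i}\coloneqq p^a_i$, so that $\picb_{\mathsf b}\big(\bar{\mathsf E}^{(\lambda_a)}_{IJ}\big)=\sigma_J q^a_I q^a_{-J}$ uniformly and the bracket check (and the consistency with $\bar{\mathsf E}_{-J,-I}=-\sigma_I\sigma_J\bar{\mathsf E}_{IJ}$, which you rightly flag) becomes a single computation. With those computations actually carried out, your argument coincides with the paper's.
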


\begin{proof}We first show that $\pic_{\mathsf b}$ is a homomorphism. It follows, exactly as in the proof of Lemma~\ref{lem: pic pict} (see Lemma~\ref{lem: pi tilde pi quant b}) that
\begin{gather} \label{Eab Ecd Cycl result}
\big\{ \pic_{\mathsf b}\big(\mathsf E^{(z_i)}_{ab \ul r}\big), \pic_{\mathsf b}\big(\mathsf E^{(z_j)}_{cd \ul s}\big) \big\} = \pic_{\mathsf b}\big( \big[ \mathsf E^{(z_i)}_{ab \ul r}, \mathsf E^{(z_j)}_{cd \ul s} \big] \big),
\end{gather}
for any $r,s=0, \ldots, \tau_i-1$, $i,j = 1, \ldots, n$ and $a,b,c,d = 1, \ldots, M$. We also clearly have
\begin{gather*}
\big\{ \pic_{\mathsf b}\big((\Pi_{(s)} \mathsf E_{ab})^{(0)}_{\ul s}\big), \pic_{\mathsf b}\big(\mathsf E^{(z_i)}_{cd \ul r} \big) \big\} = 0
\end{gather*}
for any $r = 0, \ldots, \tau_i - 1$ $i = 1, \ldots, n$ and $a,b,c,d = 1, \ldots, M$ since the canonical variables entering each argument of the Poisson brackets mutually commute.

To simplify the notation, introduce $y^{ab}_r \coloneqq \sum\limits_{u=1}^{\tau_0 - r} \big( x^a_{u+r} p^b_u - (-1)^r x^b_{u+r} p^a_u \big)$. We can then write
\begin{gather*}
\pic_{\mathsf b}\big((\Pi_{(s)} \mathsf E_{ab})^{(0)}_{\ul s}\big) = y^{ab}_s - \mu \sum_{\substack{u,v=1 \\u+v=s+1 }}^{\tau_0} (-1)^v x^a_u x^b_v.
\end{gather*}
By a similar computation to the one leading to~\eqref{Eab Ecd Cycl result}, we find that
\begin{gather*}
\big\{ y^{ab}_r, y^{cd}_s \big\} = \delta_{bc} y^{ad}_{r+s} + (-1)^s \delta_{ac} y^{db}_{r+s} + (-1)^r \delta_{bd} y^{ca}_{r+s} + (-1)^{r+s} \delta_{ad} y^{bc}_{r+s}.
\end{gather*}
Likewise, we have
\begin{gather*}
 - \sum_{\substack{v,w=1\\ v+w = s+1}}^{\tau_0} (-1)^w \big\{ y^{ab}_r, x^c_v x^d_w \big\}\\
\qquad{} = - \underset{u+w = r+s+1}{\sum_{u=r+1}^{\tau_0} \sum_{w=1}^s}
(-1)^w \big( \delta_{bc} x^a_u x^d_w + (-1)^r \delta_{bd} x^c_u x^a_w + (-1)^s \delta_{ac} x^d_u x^b_w + (-1)^{r+s} \delta_{ad} x^b_u x^c_w \big).
\end{gather*}
and also by symmetry we obtain
\begin{gather*}
- \sum_{\substack{v,w=1\\ v+w = r+1}}^{\tau_0} (-1)^w \big\{ x^a_v x^b_w, y^{cd}_s \big\} = \sum_{\substack{v,w=1\\ v+w = r+1}}^{\tau_0} (-1)^w \big\{ y^{cd}_s, x^a_v x^b_w \big\}\\
\qquad{} = - \underset{u+w = r+s+1}{\sum_{u=1}^r \sum_{w=s+1}^{\tau_0}} (-1)^w \big( \delta_{bc} x^a_u x^d_w + (-1)^r \delta_{bd} x^c_u x^a_w + (-1)^s \delta_{ac} x^d_u x^b_w + (-1)^{r+s} \delta_{ad} x^b_u x^c_w \big).
\end{gather*}
It now follows by combining all the above that
\begin{gather*}
\big\{ \pic_{\mathsf b}\big((\Pi_{(r)} \mathsf E_{ab})^{(0)}_{\ul r}\big), \pic_{\mathsf b}\big((\Pi_{(s)} \mathsf E_{cd})^{(0)}_{\ul s}\big) \big\}\\
\qquad {}= \delta_{bc} \pic_{\mathsf b}\big((\Pi_{(r)} \mathsf E_{ad})^{(0)}_{\ul{r+s}}\big) + (-1)^s \delta_{ac} \pic_{\mathsf b}\big((\Pi_{(r)} \mathsf E_{db})^{(0)}_{\ul{r+s}}\big)\\
\qquad\quad {}+ (-1)^r \delta_{bd} \pic_{\mathsf b}\big((\Pi_{(r)} \mathsf E_{ca})^{(0)}_{\ul{r+s}}\big) + (-1)^{r+s} \delta_{ad} \pic_{\mathsf b}\big((\Pi_{(r)} \mathsf E_{bc})^{(0)}_{\ul{r+s}}\big)\\
\qquad {}= \pic_{\mathsf b}\big( \big[ (\Pi_{(r)} \mathsf E_{ab})^{(0)}_{\ul r}, (\Pi_{(s)} \mathsf E_{cd})^{(0)}_{\ul s} \big] \big),
\end{gather*}
as required. And finally, since $\mathsf E^{+ (\infty)}_{ab \ul 1}$ is a Casimir and is sent to a constant under~$\pic_{\mathsf b}$, all Poisson brackets involving it are preserved by~$\pic_{\mathsf b}$.

We now turn to showing that $\picb_{\mathsf b}$ is also a homomorphism. Define $q^a_I$ for each $I \in \mathcal I$ and $a = 1, \ldots, M$ by letting $q^a_i \coloneqq x^a_i$ and $q^a_{-i} \coloneqq p^a_i$ for every $i = 1, \dots, N$. In this notation the Poisson brackets \eqref{PB for Pb} can be rewritten more uniformly as
\begin{gather*}
\big\{ q^a_I, q^b_J \big\} = \sigma_J \delta_{I, -J} \delta_{ab},
\end{gather*}
for all $I, J \in \mathcal I$ and $a, b = 1, \ldots, M$. Moreover, we also have $\picb_{\mathsf b}\big( \bar{\mathsf E}^{(\lambda_a)}_{IJ} \big) = \sigma_J q^a_I q^a_{-J}$ for all $I, J \in \mathcal I$ and $a = 1, \ldots, M$. We then have
\begin{gather*}
 \big\{ \picb_{\mathsf b}\big( \bar{\mathsf E}^{(\lambda_a)}_{IJ} \big), \picb_{\mathsf b}\big( \bar{\mathsf E}^{(\lambda_b)}_{KL} \big) \big\}\\
 \qquad {} = \sigma_J \sigma_L \big( \sigma_K \delta_{I, -K} q^a_{-J} q^a_{-L} + \sigma_{-L} \delta_{I, L} q^a_K q^a_{-J}
 + \sigma_K \delta_{J, K} q^a_I q^a_{-L} + \sigma_{-L} \delta_{J,-L} q^a_I q^a_K \big) \delta_{ab}\\
\qquad{} = \sigma_J \sigma_K \big( \picb_{\mathsf b}\big( \bar{\mathsf E}^{(\lambda_a)}_{IL} \big) \delta_{J, K} + \picb_{\mathsf b}\big( \bar{\mathsf E}^{(\lambda_a)}_{-J, -K} \big) \delta_{I, L}
 + \picb_{\mathsf b}\big( \bar{\mathsf E}^{(\lambda_a)}_{I, -K} \big) \delta_{- J,L} + \picb_{\mathsf b}\big( \bar{\mathsf E}^{(\lambda_a)}_{-J, L} \big) \delta_{K, -I} \big) \delta_{ab}\\
 \qquad{} = \picb_{\mathsf b}\big( \big[ \bar{\mathsf E}^{(\lambda_a)}_{IJ}, \bar{\mathsf E}^{(\lambda_b)}_{KL} \big] \big),
\end{gather*}
where in the second equality we have made use of the fact that $\sigma_I \sigma_{-I} = -1$ for any $I \in \mathcal I$. Finally, the Poisson brackets involving the generators $\bar{\mathsf E}^{(\infty)}_{IJ}$ attached to infinity are all trivially preserved by~$\picb_{\mathsf b}$.
\end{proof}

We are now in a position to prove the analogue of Theorem~\ref{thm: cla bispec bos} in the present context.
\begin{Theorem} \label{thm: cla bispec bos cycl}
For any $\mu \in \CC$ as in Lemma~{\rm \ref{lem: cyc pic pict}}, we have the relation
\begin{gather*}
 \pic_{\mathsf b} \left( z^{2 \tau_0} \prod_{i=1}^n (z - z_i)^{\tau_i} (z + z_i)^{\tau_i} \det\big( \lambda {\bf 1}_{M \times M} - \tilde{\mc L}^\C(z) \big) \right)\\
\qquad {}= \picb_{\mathsf b} \left( \prod_{a=1}^M (\lambda - \lambda_a) \det\big( z {\bf 1}_{N \times N} - \mc L^{\tilde\D}(\lambda) \big) \right).
\end{gather*}
\end{Theorem}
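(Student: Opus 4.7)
The plan is to emulate the proof of Theorem~\ref{thm: cla bispec bos}, suitably adapted to the cyclotomic setting and the $\sp_{2N}$ dual. The key step will be to introduce a single $(M+2N)\times(M+2N)$ block matrix with entries in the commutative algebra $\P_{\mathsf b}[z,\lambda]$, whose determinant can be evaluated in two ways via the Schur complement formula, and then to identify each side with the corresponding side of the desired identity.

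To that end I would introduce the $M\times M$ diagonal matrix $\Lambda \coloneqq \lambda\mathbf{1}_M - \pic_{\mathsf b}(\mathsf E^{+(\infty)}) = \mathrm{diag}(\lambda - \lambda_a)_{a=1}^M$, and the $2N\times 2N$ matrix $Z \coloneqq z\mathbf{1}_{2N} - \picb_{\mathsf b}(\bar{\mathsf E}^{(\infty)})$. Writing $q^a_I \in \P_{\mathsf b}$ for the unified canonical variables $q^a_i \coloneqq x^a_i$ and $q^a_{-i} \coloneqq p^a_i$ for $i=1,\dots,N$, I would define a $2N\times M$ matrix $Q$ and an $M\times 2N$ matrix $\tilde Q$ by $Q_{Ia} \coloneqq q^a_I$ and $\tilde Q_{aJ} \coloneqq \sigma_J q^a_{-J}$, and then consider
\begin{gather*}
\mathsf M \coloneqq \begin{pmatrix} \Lambda & \tilde Q \\ Q & Z \end{pmatrix}.
\end{gather*}
All entries of $\mathsf M$ commute, and both $\Lambda$ and $Z$ admit two-sided inverses over suitable rational extensions of $\P_{\mathsf b}[z,\lambda]$, so the usual Schur complement identity gives $\det\Lambda \cdot \det(Z - Q\Lambda^{-1}\tilde Q) = \det Z \cdot \det(\Lambda - \tilde QZ^{-1}Q)$. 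By inspection of Lemma~\ref{lem: cyc pic pict}, the matrix $Z$ is block upper-triangular with diagonal blocks arising from the various Jordan factors, so that $\det \Lambda = \prod_{a=1}^M(\lambda-\lambda_a)$ and $\det Z = z^{2\tau_0}\prod_{i=1}^n(z-z_i)^{\tau_i}(z+z_i)^{\tau_i}$ (the single off-diagonal $\mu$-entry of $Z$ lies strictly above the diagonal of the central $2\tau_0\times 2\tau_0$ sub-block and does not affect the determinant).

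It remains to identify the two Schur complements with $\trp(z\mathbf{1}_{2N} - \picb_{\mathsf b}(\mc L^{\bar\D}(\lambda)))$ and $\lambda\mathbf{1}_M - \pic_{\mathsf b}(\tilde{\mc L}^\C(z))$, after which the theorem follows using $\det A = \det\trp A$. The first identification is immediate: a direct computation shows $(Q\Lambda^{-1}\tilde Q)_{IJ} = \sigma_J\sum_a \frac{q^a_I q^a_{-J}}{\lambda - \lambda_a} = \sum_a\frac{\picb_{\mathsf b}(\bar{\mathsf E}^{(\lambda_a)}_{IJ})}{\lambda - \lambda_a}$, and combining with the relation $(\mc L^{\bar\D}(\lambda))_{IJ} = \bar{\mathsf E}^{(\cdots)}_{JI}$, which follows from $\bar{\mathsf E}_{-J,-I} = -\sigma_I\sigma_J\bar{\mathsf E}_{IJ}$, yields the stated transpose identity. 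The second identification is verified by splitting the contribution of $\tilde QZ^{-1}Q$ according to the block structure of $Z^{-1}$. The non-central blocks, inverses of $\trp J_{\tau_i}(z-z_i)$ and of its sign-flipped variant at $z+z_i$, reproduce via \eqref{J block inv} the $z-z_i$ and $z+z_i$ poles of $\pic_{\mathsf b}(\tilde{\mc L}^\C(z))$. The two diagonal sub-blocks at the origin contribute the $xp$-bilinear part of $\pic_{\mathsf b}((\Pi_{(r)}\mathsf E_{ab})^{(0)}_{\ul r})$, including the characteristic $-(-1)^r$ sign of the Takiff projector. Finally, the $\mu$-dependent $xx$-terms at the origin arise from the off-diagonal block of $Z^{-1}$ that links the two central sub-blocks; if the central block of $Z$ is written as $\bigl(\begin{smallmatrix}A & E\\ 0 & C\end{smallmatrix}\bigr)$ with $E$ carrying the single $\mu$-entry, the corresponding off-diagonal block of $Z^{-1}$ is $-A^{-1}EC^{-1}$.

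The main obstacle is the sign bookkeeping in this last step. Although $E$ has rank one, the upper-triangular inverses $A^{-1}$ and $C^{-1}$ spread it over a full rank-one matrix whose entries, combined with the $\sigma_K$ signs from $\tilde Q$, must reproduce exactly the sum $-\mu\sum_{u+v=r+1}(-1)^v x^b_u x^a_v$ in $\pic_{\mathsf b}((\Pi_{(r)}\mathsf E_{ba})^{(0)}_{\ul r})$. Three sources of signs must conspire correctly: the $\sigma_I$ in the definition of $\tilde Q$, the alternating $(-1)^{j-i}$ in the inverse of the sign-flipped Jordan block $A = z\mathbf{1} + (-J_{\tau_0}(0))^T$ on the negative side of the origin, and the $(-1)^v$ encoded in the Takiff projector $\Pi_{(r)}$. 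Once this is checked (by direct calculation of $A^{-1}EC^{-1}$ and reindexing $u \leftrightarrow v$), all terms of $\pic_{\mathsf b}(\tilde{\mc L}^\C(z))$ are accounted for and the theorem follows.
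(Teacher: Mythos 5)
Your proposal is essentially the paper's own proof: the same $(M+2N)\times(M+2N)$ master matrix (up to an inconsequential choice of which off-diagonal block carries the $\sigma_I$ signs), the same double Schur-complement evaluation, and the same identifications of the two complements with $\lambda\mathbf{1}-\pic_{\mathsf b}\big(\trp\tilde{\mc L}^\C(z)\big)$ and $z\mathbf{1}-\picb_{\mathsf b}\big(\mc L^{\bar\D}(\lambda)\big)$. One small orientation slip: in the paper's ordering $-N,\dots,-1,1,\dots,N$ the Jordan $-1$'s and the single $\mu$-entry $\tilde E_{1,-1}$ sit \emph{below} the diagonal, so $Z$ is block \emph{lower} triangular and the central block is $\bigl(\begin{smallmatrix}A&0\\E&C\end{smallmatrix}\bigr)$ with off-diagonal inverse block $-C^{-1}EA^{-1}$ — but this does not affect $\det Z$ or the structure of your argument.
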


\begin{proof}We follow the argument given in the proof of Theorem \ref{thm: cla bispec bos} very closely. Consider the $M \times M$ and $2N \times 2N$ block matrices
\begin{gather*}
\Lambda \coloneqq \big( (\lambda - \lambda_a) \delta_{ab} \big)_{a, b = 1}^M, \\
Z \coloneqq \bigoplus_{i=n}^1 \big( {}-J_{\tau_i}(-z - z_i) \big) \oplus \big( {-} J_{\tau_0}(-z) \big) \oplus J_{\tau_0}(z) \oplus \bigoplus_{i=1}^n J_{\tau_i}(z - z_i) + \mu \tilde E_{1, -1}.
\end{gather*}
We use here the convention, cf.\ Section~\ref{sec: so2N}, that indices on components of the $2N \times 2N$ matrix $Z$ run through the index set $\mathcal I = \{ -N, \ldots, -1, 1, \ldots, N \}$.
As an example of the form of the matrix $Z$, if $n=2$, $\tau_0=2$, $\tau_1=1$ and $\tau_2=2$ then we have
\begin{gather*}
Z = \begin{pmatrix}z+{{z}_{2}} & 0 & & & & & & & & \cr 1 & z+{{z}_{2}} & & & & & & & 0 & \cr & & z+{{z}_{1}} & & & & & & & \cr & & & z & 0 & 0 & 0 & & & \cr & & & 1 & z & 0 & 0 & & & \cr & & & 0 & \mu & z & 0 & & & \cr & & & 0 & 0 & -1 & z & & & \cr & & & & & & & z-{{z}_{1}} & & \cr & 0 & & & & & & & z-{{z}_{2}} & 0 \cr & & & & & & & & -1 & z-{{z}_{2}}\end{pmatrix}.
\end{gather*}

We define a pair of $M \times 2N$ matrices $P$ and $X$, whose columns are also indexed by the set~$\mathcal I$, as
\begin{gather*}
\trp P \coloneqq \bmx
-x_N^1 & \dots & -x_N^M\\
\vdots & \ddots& \vdots\\
-x_1^1 & \dots & -x_1^M\\
 p_1^1 & \dots & p_1^M\\
\vdots & \ddots& \vdots\\
 p_N^1 & \dots & p_N^M\emx,
\qquad
X \coloneqq \bmx p_N^1 & \dots & p_1^1 & x_1^1 & \dots & x_N^1 \\
\vdots & \ddots & \vdots &\vdots & \ddots & \vdots \\
p_N^M & \dots & p_1^M & x_1^M & \dots & x_N^M\emx.
\end{gather*}
Consider now the block $(M+2N) \times (M+2N)$ square matrix \eqref{clasMdef} with $\Lambda$, $Z$, $X$ and $P$ defined as above. Now the derivation leading to the equation \eqref{clas bos master eq} from the proof of Theorem \ref{thm: cla bispec bos} still holds and so it just remains to compute the determinants appearing on both sides of this identity.

On the one hand, we have
\begin{gather*}
 \Lambda - X Z^{-1} \trp P = \sum_{a,b = 1}^M E_{ab} \big(\Lambda - X Z^{-1} \trp P\big)_{ab}\\
\qquad{} = \lambda {\bf 1} - \sum_{a, b= 1}^M E_{ab} \left( \pic_{\mathsf b}\big(\mathsf E^{+ (\infty)}_{ab \ul 1}\big) + \sum_{i=1}^n \sum_{j, k = \nu_i+1}^{\nu_i+ \tau_i} x^a_j \big(Z^{-1}\big)_{jk} p^b_k + \sum_{j, k = 1}^{\tau_0} x^a_j \big(Z^{-1}\big)_{jk} p^b_k\right.\\
\left.\qquad\quad {}
- \sum_{j, k = 1}^{\tau_0} p^a_j \big(Z^{-1}\big)_{-j, -k} x^b_k - \sum_{j, k = 1}^{\tau_0} x^a_j \big(Z^{-1}\big)_{j, -k} x^b_k
- \sum_{i=1}^n \sum_{j, k = \nu_i+1}^{\nu_i+ \tau_i} p^a_j \big(Z^{-1}\big)_{-j, -k} x^b_k \right).
\end{gather*}
For each $i = 1, \ldots, n$ we note using the expression \eqref{J block inv} for the inverse of a Jordan block together with Lemma~\ref{lem: cyc pic pict} that
\begin{gather*}
\sum_{j, k = \nu_i+1}^{\nu_i+ \tau_i} x^a_j \big(Z^{-1}\big)_{jk} p^b_k = \sum_{r=0}^{\tau_i - 1} \frac{\pic_{\mathsf b}\big( \mathsf E_{ab \ul r}^{(z_i)} \big)}{(z - z_i)^{r+1}},\\
- \sum_{j, k = \nu_i+1}^{\nu_i+ \tau_i} p^a_j \big(Z^{-1}\big)_{-j, -k} x^b_k = \sum_{r=0}^{\tau_i - 1} \frac{(-1)^{r+1} \pic_{\mathsf b}\big( \mathsf E_{ba \ul r}^{(z_i)} \big)}{(z + z_i)^{r+1}}.
\end{gather*}
Next, for the two terms in the middle line above, corresponding to the origin, we find
\begin{gather*}
\sum_{j, k = 1}^{\tau_0} \big( x^a_j \big(Z^{-1}\big)_{jk} p^b_k - p^a_j \big(Z^{-1}\big)_{-j, -k} x^b_k \big) = \sum_{s=0}^{\tau_0 - 1} \frac{1}{z^{s+1}} \sum_{u=1}^{ \tau_0 - s} \big( x^a_{u+s} p^b_u - (-1)^s x^b_{u+s} p^a_u \big).
\end{gather*}
Finally, for the remaining term we have
\begin{gather*}
- \sum_{j, k = 1}^{\tau_0} x^a_j \big(Z^{-1}\big)_{j, -k} x^b_k = - \sum_{s=1}^{2 \tau_0 - 1} \frac{\mu}{z^{s+1}} \sum_{\substack{u, v = 1\\ u+v = s+1}}^{\tau_0} (-1)^v x^a_u x^b_v.
\end{gather*}
Putting all the above together we deduce that $\Lambda - X Z^{-1} \trp P = \lambda {\bf 1} - \pic_{\mathsf b} \big( \trp \tilde{\mathcal L}^{\mathcal C}(z) \big)$.

On the other hand, we have
\begin{align*}
Z - \trp P \Lambda^{-1} X &= \sum_{I,J \in \mathcal I} \tilde E_{IJ} \big(Z - \trp P \Lambda^{-1} X\big)_{IJ}\\
&= z {\bf 1} - \sum_{(I,J) \in \mathcal I_2} \bar E^{IJ} \left( \picb_{\mathsf b}\big( \bar{\mathsf E}^{(\infty)}_{IJ} \big) - \sum_{a = 1}^M \frac{\picb_{\mathsf b}\big( \bar{\mathsf E}^{(\lambda_a)}_{IJ} \big)}{\lambda - \lambda_a} \right) = z {\bf 1} - \picb_{\mathsf b}\big( \mathcal L^{\bar\D}(\lambda) \big).
\end{align*}
To see the second equality we note that setting $z=0$ in $Z - \trp P \Lambda^{-1} X$ yields a $2N \times 2N$ symplectic matrix, i.e., of the block form
\begin{gather*}
M = \left(
\begin{matrix}
A & B\\
C & - \widetilde{A}
\end{matrix}
\right)
\end{gather*}
with $\widetilde{B} = B$ and $\widetilde{C} = C$, where for an $N \times N$ matrix $A$ we denote by $\widetilde{A}$ the transpose of $A$ along the minor diagonal. And for any such matrix $M$ we have
\begin{align*}
M &= \sum_{I,J \in \mathcal I} \tilde E_{IJ} M_{IJ} = \sum_{i,j = 1}^N \big( \big( \tilde E_{ij} - \tilde E_{-j,-i} \big) A_{ij} + \tilde E_{i,-j} C_{ij} - \tilde E_{-i,j} B_{ij} \big)\\
&= \sum_{i,j = 1}^N \bar E^{ij} A_{ji} + \sum_{\substack{i,j = 1\\ i \leq j}}^N \bar E^{-i,j} C_{ji} - \sum_{\substack{i,j = 1\\ i \leq j}}^N \bar E^{i,-j} B_{ji} = \sum_{(I,J) \in \mathcal I_2} \bar E^{IJ} M_{IJ}.
\end{align*}
Lastly, we clearly have $\det \Lambda = \prod\limits_{a=1}^M (\lambda - \lambda_a)$ and $\det Z = z^{2 \tau_0} \prod\limits_{i=1}^n (z - z_i)^{\tau_i}(z + z_i)^{\tau_i}$ from which the result now follows, using again the fact that $\det \trp A = \det A$ for any square matrix~$A$, as in the proof of Theorem~\ref{thm: cla bispec bos}.
\end{proof}

\begin{Remark} \label{rem: cyclo quantum}
Consider replacing $p$ by $\del$ in the $(M+2N) \times (M+2N)$ square matrix
\begin{gather*}
\bmx \lambda- \lambda_1 & & 0 &
p_N^1 & \dots & p_1^1 & x_1^1 & \dots & x_N^1 \\
& \ddots& &
\vdots & \ddots & \vdots &\vdots & \ddots & \vdots \\
0& & \lambda - \lambda_M & p_N^M & \dots & p_1^M & x_1^M & \dots & x_N^M\\
-x_N^1 & \dots & -x_N^M & & & & & & \\
\vdots & \ddots& \vdots & & & & & & \\
-x_1^1 & \dots & -x_1^M & & & & & & \\
 p_1^1 & \dots & p_1^M & & & Z & & & \\
\vdots & \ddots& \vdots & & & & & & \\
 p_N^1 & \dots & p_N^M & & & & & & \emx
\end{gather*}
used in the proof of Theorem \ref{thm: cla bispec bos cycl}. The resulting square matrix with non-commutative entries is not Manin since, for example, the entries of the first column are not mutually commuting. Consequently, we do not immediately obtain a quantum analogue of the classical relation in Theorem \ref{thm: cla bispec bos cycl}.

A related remark is that in the quantum case, higher Gaudin Hamiltonians for cyclotomic Gaudin models do exist but they are not in general given by a simple $\cdet$-type formula. See \cite{VY1,VY2} (and especially Remark 2.5 in \cite{VY1}).
\end{Remark}

\begin{Remark}
Note that we did not allow irregular singularities on the $\sp_{2N}$ side (appart from the double pole at infinity).

From the point of view of $(\gl_M, \gl_N)$-duality, the absence of irregular singularities in the $\sp_{2N}$-Gaudin model is controlled by the fact that the matrix
\begin{gather} \label{matrix at infinity}
\big( \pic_{\mathsf b}\big( \mathsf E^{+ (\infty)}_{ab \ul 1} \big) \big)_{a,b = 1}^M,
\end{gather}
representing the Casimir generators attached to infinity in the cyclotomic $\gl_M$-Gaudin model, is purely diagonal and in particular has no Jordan blocks, as in Lemma~\ref{lem: cyc pic pict}. Yet this is forced on us since the matrix~\eqref{matrix at infinity} is symmetric.

Alternatively, note that if one naively attempts to run the arguments above for the divisor~$\tilde \D$ in place of~$\bar \D$, one does not obtain a homomorphism $\sp_{2N}^{\tilde \D} \to \mathcal P_{\mathsf b}$. For example, Poisson brackets of the form $\big\{{-}\sum_{u} x^u_i x^{u+1}_j,\sum_{v} p^v_k p^{v+1}_l\big\}$ produce two sorts of terms: ``good'' terms like $\sum_u x^u_i p^{u+2}_l \delta_{jk}$, which respect the gradation of the Takiff algebra, but also ``bad'' terms like $\sum_u x^{u+1}_j p_l^{u+1}\delta_{ik}$, which do not.
\end{Remark}

\subsection{Example: Neumann model}

We end this section by considering the special case of Theorem \ref{thm: cla bispec bos cycl} when $N=1$ and $\mu = -1$.

Specifically, for the $\ZZ_2$-cyclotomic Gaudin model of Section~\ref{sec: Z2 cyclo} we take $n=0$ and $\tau_0 = 1$. The formal Lax matrix~\eqref{formal Lax glM cyclotomic} of the corresponding cyclotomic $\gl_M$-Gaudin model with effective divisor $\mathcal C = 2 \cdot 0 + 2 \cdot \infty$ then reduces to
\begin{gather} \label{Neumann formal Lax 1}
\tilde{\mc L}^\C(z) dz = \sum_{a, b=1}^M E_{ba} \otimes \left( \mathsf E^{+(\infty)}_{ab \ul 1} + \frac{\mathsf E^{-(0)}_{ab \ul 0}}{z} + \frac{\mathsf E^{+(0)}_{ab \ul 1}}{z^2} \right) dz.
\end{gather}

When $N=1$ in Section~\ref{sec: so2N} we have the canonical isomorphism $\sp_2 \simeq \mathfrak{sl}_2$ given by $\bar{\mathsf E}_{11} \mapsto - \mathsf H$, $\bar{\mathsf E}_{1,-1} \mapsto 2 \mathsf F$ and $\bar{\mathsf E}_{-1, 1} \mapsto 2 \mathsf E$. The dual basis elements are sent under this isomorphism to $\bar{\mathsf E}^{11} = \bar{\mathsf E}_{11} \mapsto - \mathsf H$, $\bar{\mathsf E}^{1,-1} = \ha \bar{\mathsf E}_{-1,1} \mapsto \mathsf E$ and $\bar{\mathsf E}^{-1,1} = \ha \bar{\mathsf E}_{1,-1} \mapsto \mathsf F$. The formal Lax matrix~\eqref{formal Lax so2N} of the $\mathfrak{sl}_2$-Gaudin model with effective divisor~\eqref{bar D} then becomes,
\begin{align} \label{Neumann formal Lax 2}
\mc L^{\bar\D}(\lambda) d\lambda= \Bigg(& H \otimes \mathsf H^{(\infty)} + 2 E \otimes \mathsf F^{(\infty)} + 2 F \otimes \mathsf E^{(\infty)} \notag\\
&{}+ \sum_{a=1}^M \frac{H \otimes \mathsf H^{(\lambda_a)} + 2 E \otimes \mathsf F^{(\lambda_a)} + 2 F \otimes \mathsf E^{(\lambda_a)}}{\lambda - \lambda_a} \Bigg) d\lambda,
\end{align}
where we have used the notation
\begin{gather*}
E \coloneqq \rho(\mathsf E) = \left(
\begin{matrix}
0 & 1\\ 0 & 0
\end{matrix}
\right), \qquad
F \coloneqq \rho(\mathsf F) = \left(
\begin{matrix}
0 & 0\\ 1 & 0
\end{matrix}
\right), \qquad
H \coloneqq \rho(\mathsf H) = \left(
\begin{matrix}
1 & 0\\ 0 & -1
\end{matrix}
\right).
\end{gather*}

The Poisson algebra $\mathcal P_{\mathsf b}$ in the present context is simply $\CC[x_a, p_a]_{a,b=1}^M$ where we have dropped the subscript $1$ from the canonical variables by defining $x_a \coloneqq x^a_1$ and $p_a \coloneqq p^a_1$. In terms of this notation, the representation $\pi_{\mathsf b} \colon \gl^{\mathcal C}_M \to \mathcal P_{\mathsf b}$ from Theorem~\ref{thm: cla bispec bos cycl} reads
\begin{gather*}
\pic_{\mathsf b}\big(\mathsf E^{+(\infty)}_{ab \ul 1}\big) = \lambda_a \delta_{ab}, \qquad
\pic_{\mathsf b}\big(\mathsf E^{-(0)}_{ab \ul 0}\big) = x_a p_b - x_b p_a, \qquad
\pic_{\mathsf b}\big(\mathsf E^{+(0)}_{ab \ul 1}\big) = - x_a x_b,
\end{gather*}
recalling that $\mu = -1$. Correspondingly, the map $\picb_{\mathsf b}\colon \sp^{\mathcal C}_2 \to \mathcal P_{\mathsf b}$ takes the form
\begin{alignat*}{4}
& \picb_{\mathsf b}\big(\mathsf E^{(\infty)}\big) = \ha, \qquad &&
\picb_{\mathsf b}\big(\mathsf F^{(\infty)}\big) = 0, \qquad &&
\picb_{\mathsf b}\big(\mathsf H^{(\infty)}\big)= 0,& \\
&\picb_{\mathsf b}\big(\mathsf E^{(\lambda_a)}\big)= \ha p_a^2, \qquad &&
\picb_{\mathsf b}\big(\mathsf F^{(\lambda_a)}\big) = - \ha x_a^2, \qquad &&
\picb_{\mathsf b}\big(\mathsf H^{(\lambda_a)}\big) = x_a p_a.&
\end{alignat*}

Applying the first representation $\pic_{\mathsf b}$ to the formal Lax matrix \eqref{Neumann formal Lax 1} we find
\begin{align*}
\tilde L(z)dz &\coloneqq \pic_{\mathsf b} \big( \tilde{\mc L}^\C(z) \big) dz\\
&\, = \left( \sum_{a=1}^M \lambda_a E_{aa} - z^{-1} \sum_{a,b=1}^M (x_a p_b - x_b p_a) E_{ab} - z^{-2} \sum_{a,b=1}^M x_a x_b E_{ab} \right) dz.
\end{align*}
If we introduce variables $\omega_a$, $a = 1, \ldots, M$ such that $\omega_a^2 = \lambda_a$ then the above coincides with the $M \times M$ Lax matrix of the Neumann model, with Hamiltonian
\begin{gather*}
H = \frac{1}{4} \sum_{\substack{a,b = 1\\ a \neq b}}^M (x_a p_b - x_b p_a)^2 + \frac{1}{2} \sum_{a=1}^M \omega_a^2 x_a^2,
\end{gather*}
describing the motion of a particle constrained to the sphere $\sum\limits_{a=1}^M x_a^2 = 1$ in $\mathbb{R}^M$ and subject to harmonic forces with frequency $\omega_a$ along the $a^{\rm th}$ axis. On the other hand, applying~$\picb_{\mathsf b}$ to the formal Lax matrix~\eqref{Neumann formal Lax 2} yields
\begin{gather*}
L(\lambda) d\lambda \coloneqq \picb_{\mathsf b} \big( \mc L^{\bar \D}(\lambda) \big) d\lambda = 2 \left(
\begin{matrix}
\sum\limits_{a=1}^M \frac{x_a p_a}{\lambda - \lambda_a} & \sum\limits_{a=1}^M \frac{- x_a^2}{\lambda - \lambda_a}\\
1 + \sum\limits_{a=1}^M \frac{p_a^2}{\lambda - \lambda_a} & - \sum\limits_{a=1}^M \frac{x_a p_a}{\lambda - \lambda_a}
\end{matrix}
\right) d\lambda,
\end{gather*}
which coincides with the expression for the $2 \times 2$ Lax matrix of the same model. The statement of Theorem~\ref{thm: cla bispec bos cycl} corresponds to the well known relation between the above two Lax formulations of the Neumann model (see, e.g., \cite[Section~12]{Discrete})
\begin{gather*}
z^2 \det\big( \lambda \mathbf{1}_{M \times M} - \tilde L(z) \big) = \prod_{a=1}^M (\lambda - \lambda_a) \det\big( z \mathbf{1}_{2 \times 2} - L(\lambda) \big).
\end{gather*}

\pdfbookmark[1]{References}{ref}
\LastPageEnding

\end{document}